\DeclareMathOperator{\Span}{span}
\DeclareMathOperator{\Div}{div}
\theoremstyle{plain}
\newtheorem{definition}{Definition}[section]
\newtheorem{theorem}[definition]{Theorem}
\newtheorem{lemma}[definition]{Lemma}
\newtheorem{corollary}[definition]{Corollary}
\theoremstyle{definition}
\newtheorem*{remark}{Remark}
\newtheorem*{example}{Example}
\title[A restricted superposition principle for (non-)linear FPKEs on HS]{A restricted superposition principle for (non-)linear Fokker--Planck--Kolmogorov equations on Hilbert spaces}
\author{Martin Dieckmann}
\email{martin.dieckmann@math.uni-bielefeld.de}
\address{Bielefeld University, Universitätsstrasse 25, 33615 Bielefeld, Germany}
\begin{document}
\begin{abstract}
We prove a version of the Ambrosio--Figalli--Trevisan superposition principle for a restricted subclass of solutions to the Fokker--Planck--Kolmogorov equation, that is valid on separable infinite-dimensional Hilbert spaces. 
Furthermore, we transfer this restricted superposition principle into a nonlinear setting.
\end{abstract}
\keywords{Fokker--Planck--Kolmogorov equation, superposition principle, martingale problem, Galerkin approximations, stochastic Navier--Stokes equation}
\subjclass[2010]{35Q84, 60G46}

\maketitle

\section{Introduction}
In \cite[Theorem 2.5, p.\ 7]{Tre16}, D.~Trevisan proved a superposition principle for linear Fokker--Planck--Kolmogorov equations (short: FPKEs) on $\mathbb R^n$, extending the prior seminal work of L.~Ambrosio (see \cite{Amb08}) and A.~Figalli (see \cite{Fig08}) substantially, assuming only fairly weak regularity and integrability conditions on the coefficients.
An infinite-dimensional analogue on $\mathbb R^\infty$, equipped with the product topology (i.e.~solutions of the martingale problem are probability measures on $C([0,T];\mathbb R^\infty)$), can be found in \cite[Section 7.1]{Tre14}.

This article will focus on the Hilbert space case equipped with its norm topology, which is in itself a very different approach to the problem.
In general, we have to impose stricter (but still commonly used) compactness assumptions to ensure that the constructed martingale solutions are supported on a path space with values in a separable Hilbert space and where the paths are continuous with respect to the norm topology of e.g.~another, possibly larger separable Hilbert space instead of on $C([0,T];\mathbb R^\infty)$ with its componentwise continuity.

First of all, this direction for a generalization is interesting because many applications typically have their setting in Hilbert spaces for which the superposition principle on $\mathbb R^\infty$ is insufficient.
Furthermore, the connection between probability solutions to FPKEs (see Definition \ref{def:linear-FPKE-notion-solution} below) and martingale solutions in the original sense of Stroock--Varadhan (see \cite{SV79}) to the corresponding martingale problem via the superposition principle is of most scientific value in a setting without uniqueness.
Having uniqueness, the probability solution could directly be generated from the martingale solution through its time-marginal laws by simply setting $\mu_t := P \circ x(t)^{-1}$ and would, of course, coincide with any constructed solution to the FPKE.
A non-unique Hilbert space setting is most prominently the case for $d$-dimensional stochastic Navier--Stokes equations (short: SNSEs) making it a prime candidate for an application of the methods studied.

Before proceeding, we should point out that in general the superposition principle on a separable Hilbert space $\mathbb H$ does not always hold, allowing us to realize that a simple and direct adaption of Trevisan's result is not to be expected.
\begin{example}
\item Let $\mathbb H = \ell_2$ and let the Kolmogorov operator $L$ be given by the generator of an Ornstein--Uhlenbeck process in $\ell_2$ with a constant diffusion coefficient and a (unbounded) linear drift $b$, that satisfies Fernique's necessary and sufficient condition (guaranteeing the non-continuity of sample paths, see \cite[Theorem 1.3.2, p.\ 11]{Fer75} for the original theorem).
Then the corresponding FPKE even has a stationary solution $\mu_t=\mu$ for every $t \geq 0$ with $\mu \in \mathcal P(\ell_2)$, but there exists no probability measure $P$ on $C([0,\infty);\ell_2)$ with time-marginals equal to $\mu$ for every $t$.
For additional details we refer to \cite[Example 6.6 (ii), p.\ 270f]{BR95}.
\end{example}

Hence, we can at most expect to derive a somehow restricted version of the superposition principle. 

In the first part of this work, which is devoted to the linear case (i.e.~Sections \ref{section:framework}--\ref{section:proof-of-thm1}), we begin by stating the first main result of this paper (see Theorem \ref{cor:SP-onH-linear-anySolutionwith} below in Section \ref{section:consequences}), which is a superposition principle on a separable Hilbert space $\mathbb H$ but only for a restricted subclass of solutions to the FPKE. 
In short, this means that for any given probability solution $(\mu_t)_{t \in [0,T]}$ to an infinite-dimensional FPKE, for which there already exists a subsequence of finite-dimensional solutions being created by Galerkin approximations and converging weakly to $(\mu_t)_{t \in [0,T]}$ as well as the necessary integrability conditions and assumptions for the corresponding martingale problem, we immediately obtain a martingale solution $P$ to the infinite-dimensional martingale problem satisfying the relation
\begin{align}\label{eq:intro-SP-eq}\tag{SP}
  P \circ x(t)^{-1} = \mu_t
\end{align}
for every $t \in [0,T]$, i.e.~$P$ ``superposes'' the given solution of the FPKE.

The proof of Theorem \ref{cor:SP-onH-linear-anySolutionwith} follows directly from the more general construction of a crucial lemma (see Lemma \ref{thm:SP-onH-linear} below) that we introduce in Section \ref{section:auxiliary}.
In this lemma we will take a step back and start with combining and adapting well-known approaches from the literature to ensure ``joint'' existence of our desired solutions.
To be more precise, we use that, on the one hand, the authors of \cite{BDRS15} (see also \cite[Section 10.4]{BKRS15}) construct a probability solution to an infinite-dimensional FPKE as a weak limit of finite-dimensional solutions.
On the other hand, in the article \cite{GRZ09} (see also \cite{RZZ15} for a more refined proof based on the same techniques in a setting with delay), the authors construct a martingale solution to a corresponding infinite-dimensional martingale problem on $\mathbb H$ in a very similar fashion using Galerkin approximations.
This way, we will in particular answer the question if and how those constructions are linked.

In fact, with our approach we still closely follow the fundamental question answered by the superposition principle in finite dimensions, namely: In a non-unique setting, can we obtain a martingale solution on the path space from given time-marginals, which necessarily have to solve the corresponding FPKE on $\mathbb H$?
Motivated by that question, we will first focus on our plan to construct a probability solution $(\mu_t)_{t\in[0,T]}$ to the FPKE 
\begin{align*}
  \partial_t \mu_t = L^{*}  \mu_t
\end{align*}
(the same way as in \cite{BDRS15} and \cite{BKRS15}) as a weak limit of finite-dimensional probability solutions $(\mu_{t,n})_{t\in[0,T]}$, $n\in \mathbb N$, for a suitable approximating FPKE on $\mathbb R^n$ and generate the corresponding finite-dimensional martingale solutions $P_n$ through the time-marginals via the finite-dimensional superposition principle.
The martingale solution $P$ which is obtained as a weak limit of those $P_n$ (by using the same methods as in \cite{GRZ09} and \cite{RZZ15}) is, therefore, still generated in the sense of the superposition principle. 
In addition, our construction fulfills the infinite-dimensional Equation \eqref{eq:intro-SP-eq} in the limit as well as all estimates from both individual constructions.

In Section \ref{section:navier-stokes} we will discuss an application of Theorem \ref{cor:SP-onH-linear-anySolutionwith} and Lemma \ref{thm:SP-onH-linear} to SNSEs.
Let us note that all four references \cite{GRZ09}, \cite{RZZ15}, \cite{BDRS15}, \cite{BKRS15} feature SNSEs as an application making it an obvious choice also in our case.

The second part of this work (i.e.~Sections \ref{section:framework-nonlinear} and \ref{section:nonlinear-results}) is devoted to the nonlinear case in which we will adapt Theorem \ref{cor:SP-onH-linear-anySolutionwith} to a nonlinear version of the restricted superposition principle on $\mathbb H$ (see Theorem \ref{thm:SP-nonlinear-onH} below).
We make use of the idea in \cite{BR18a} and \cite[Section 2]{BR18b} on ``freezing'' a nonlinear solution.
This means, that if we are given a probability solution $(\mu_t)_{t \in [0,T]}$ to a nonlinear FPKE
\begin{align*}
  \partial_t \mu_t = L_{\mu_t}^{*} \, \mu_t,
\end{align*}
we fix this $(\mu_t)_{t \in [0,T]}$ and consider the linear FPKE
\begin{align*}
  \partial_t \varrho_t = L_{\mu_t}^{*} \, \varrho_t
\end{align*}
for which $(\mu_t)_{t \in [0,T]}$ is a particular solution.
This allows us to apply results for linear FPKEs, but now with coefficients depending on this fixed path of measures $(\mu_t)_{t \in [0,T]}$. 
In our case, we will assume that the assumptions on our coefficients are uniform in the measure-component (see Subsection \ref{section:nonlinear-Assumptions} below) and, hence, satisfy all assumptions necessary for Theorem \ref{cor:SP-onH-linear-anySolutionwith}.
The martingale solution that we obtain for the martingale problem with coefficients $(t,x)\longmapsto b(t, x, \mu_{t})$ and $(t,x)\longmapsto\sigma(t, x, \mu_{t})$ is connected to a corresponding McKean--Vlasov SDE (see \cite{BR18a}, \cite{BR18b} for details).

\section{Framework}\label{section:framework}
First, let us introduce the framework obtained by carefully combining both settings from \cite{GRZ09}, \cite{RZZ15} (in the simplified case where $\mathbb Y = \mathbb H$ and on a time interval $[0,T]$ instead of $[0, \infty)$) and \cite{BDRS15}.

Let $T > 0$ and let $\mathbb H$ be a separable Hilbert space with inner product $\langle \cdot, \cdot \rangle_{\mathbb H}$ and norm $\| \cdot \|_{\mathbb H}$.
Recall that all infinite-dimensional separable Hilbert spaces are isometrically isomorphic to $\ell^2$ (see e.g.~\cite[Remark 10, p.\ 144]{Bre11}) and that we can treat $\ell^2$ as a subspace of $\mathbb R^\infty$, where $\mathbb R^\infty$, equipped with the product topology, is a Polish space.
This means we consider the continuous and dense embedding
\begin{align*}
  \ell^2 \subseteq \mathbb R^\infty
\end{align*}
as it is done in \cite{BDRS15}.
Let $\{ e_1, e_2, \dots \}$ be the standard orthonormal basis in $\ell^2$.
Then we define $\mathbb H_n := \Span\{e_1, \dots, e_n \}$, for $n \in \mathbb N$.

To study martingale problems on $\mathbb H$, we introduce another separable Hilbert space $\mathbb X$ for which the embedding
\begin{align*}
  \mathbb X \subseteq \mathbb H \simeq \mathbb H^* \subseteq \mathbb X^{*}
\end{align*}
is continuous, dense and compact.
Here, we write $\mathbb X^{*}$ for the dual space of $\mathbb X$ and $\| \cdot \|_{\mathbb X^{*}}$, $\| \cdot \|_{\mathbb X}$ denote their respective norms.
In order to apply the methods of \cite{GRZ09}, we have to ensure that $\{ e_1, e_2, \dots \} \subseteq \mathbb X$ holds and we have $\| \Pi_n z \|_{\mathbb X^{*}} \leq \| z \|_{\mathbb X^{*}}$ for every $z \in \mathbb X^*$.
Here, the projection $\Pi_n \colon \mathbb X^* \longrightarrow \mathbb H_n$ is defined by 
\begin{align*}
  \Pi_n z := \sum_{i=1}^n {}_{\mathbb X^*}^{}\langle z, e_i \rangle_{\mathbb X}^{} \, e_i, \quad z \in \mathbb X^*.
\end{align*}

Therefore, we follow \cite[Proposition 3.5, p.\ 424]{AR89} and \cite[Remark 3, p.\ 136f]{Bre11} (in particular using that $\mathbb X \subseteq \mathbb H$ is compact) and identify $\mathbb X$ with the weighted $\ell^2$-space $\ell^2( \lambda_i)$ for some sequence $(\lambda_i)_{i \in \mathbb N}$ with $\lim\limits_{i \rightarrow \infty} \lambda_i = \infty$ and $\lambda_i \geq 0$.
By considering its dual $\ell^2 \big(  \frac{1}{\lambda_i}  \big)$ we arrive at the embedding
\begin{align*}
 \phantom{\mathbb R^\infty_0 \subseteq}  \ell^2( \lambda_i) \subseteq  \ell^2 \subseteq \ell^2 \big(  \tfrac{1}{\lambda_i}  \big) \subseteq \mathbb R^\infty,
\end{align*}
where the dual pairing between $\ell^2( \lambda_i)$ and $\ell^2 \big(  \frac{1}{\lambda_i}  \big)$ is given by
\begin{align*}
  {}_{\mathbb X^*}^{}\langle z, v \rangle_{\mathbb X}^{} = \sum\limits_{i=1}^\infty z^i v^i,
\end{align*}
for any $z \in \mathbb X^*$ and $v \in \mathbb X$.

\begin{remark}
It follows from Kuratowski's theorem (see e.g.~\cite[p.\ 487f]{Kur66} or \cite[Section I.3,  p.\ 15ff]{Par67}) that we have $\mathbb X \in \mathcal B ( \mathbb H )$, $\mathbb H \in \mathcal B (\mathbb X^*)$, $\mathbb X^* \in \mathcal B(\mathbb R^\infty)$ and $\mathcal B ( \mathbb X )= \mathcal B ( \mathbb H ) \cap \mathbb X $, $\mathcal B ( \mathbb H )= \mathcal B ( \mathbb X^* ) \cap \mathbb H $, $\mathcal B ( \mathbb X^* )= \mathcal B ( \mathbb R^\infty ) \cap \mathbb X^*$.
Here, for a topological space $E$, $\mathcal B(E)$ denotes the Borel-$\sigma$-algebra and, for a subset $U \subseteq E$, the expression $\mathcal B(E) \cap U := \{ B \cap U \mid B \in \mathcal B(E) \}$ means the trace-$\sigma$-algebra on $U$. 
\end{remark}

\begin{remark}
We see that the projection $\Pi_n$ onto $\mathbb H_n$ in $\mathbb X^*$ in fact simplifies to
\begin{align*}
  \Pi_n z = \sum_{i=1}^n {}_{\mathbb X^*}^{}\langle z, e_i \rangle_{\mathbb X}^{} \, e_i = \sum_{i=1}^n z^i e_i = (z^1, \ldots, z^n, 0, \ldots)
\end{align*}
for any $z \in \mathbb X^*$.
\end{remark}
In addition, let $\mathbb U$ be another separable Hilbert space with inner product $\langle \cdot, \cdot \rangle_{\mathbb U}$ and norm $\| \cdot \|_{\mathbb U}$.

\smallskip
\noindent
\textbf{Path-space:}
Let 
\begin{align*}
  \Omega := C \big( [0, T] ; \mathbb X^* \big)
\end{align*}
be the space of all continuous functions from $[0, T]$ to $\mathbb X^*$.
By $x \colon \Omega \longrightarrow \mathbb X^*$ we denote the canonical process on $\Omega$ given by $x(t,\omega):= \omega(t)$.
We define the $\sigma$-algebra
\begin{align*}
 \mathcal F_t := \sigma \big( x(s) \, \big| \, s \in [0,t] \big)
\end{align*}
for every $t \in [0,T]$. 
For every $n \in \mathbb N$ we set
\begin{align*}
  \Omega_{n}:=C([0,T];\mathbb{H}_n).
\end{align*} 
Furthermore, we denote by $x_{n}$ the canonical process on $\Omega_n$ given by $x_n(t,\omega):=\omega(t)$ and define
\begin{align*}
  \mathcal{F}_t^{(n)}:=\mathcal{B}(C([0,t];\mathbb{H}_n)).
\end{align*} 

\noindent
\textbf{Further spaces:}
Define
\begin{align*}
  \mathbb S := C \big( [0, T]; \mathbb X^* \big) \cap L^p ([0,T]; \mathbb H),
\end{align*}
where the $p \geq 2$ is later to be specified in our assumptions (see Subsection \ref{subsection:linear-assumptions} below).
Note that $\mathbb S$ is a Polish space.

We define the following classes of so-called finitely based functions given by
\begin{align}\label{def:FC-c-infty-functions}
\hspace{-0.5em}
\begin{split}
     \mathcal FC^{2} (\{e_i\}) &:=  \big\{ f \colon \mathbb R^\infty \longrightarrow \mathbb R \, \big| \, f(y) = g\big( y^1, \ldots, y^d \big), d\in \mathbb N, g \in C^2(\mathbb R^d) \big\},
  \\ \mathcal FC^\infty_c (\{e_i\}) &:= \big\{ f \colon \mathbb R^\infty \longrightarrow \mathbb R \, \big| \, f(y) = g\big( y^1, \ldots, y^d \big), d\in \mathbb N, g \in C^\infty_c(\mathbb R^d) \big\}
\end{split}
\end{align}
(see e.g.~\cite[p.\ 54]{MR92} or \cite[p.\ 404f]{BKRS15}).

For two separable Hilbert spaces $\mathbb H_1$ and $\mathbb H_2$, let $L_2 \big(\mathbb H_1; \mathbb H_2 \big)$ be the space of all Hilbert--Schmidt operators from $\mathbb H_1$ to $\mathbb H_2$ with norm $\| \cdot \|_{L_2 ( \mathbb H_1; \mathbb H_2 )}$.

By $\mathfrak U^\varrho$, for $\varrho \geq 1$, we denote the class of functions $\mathcal N \colon \mathbb H \longrightarrow [0, \infty]$ with the following properties:
\begin{enumerate}[label=(\roman*), leftmargin=40pt]
\item $\mathcal N (y)=0$ implies $y=0$, 
\item $\mathcal N (c y) \leq c^\varrho \mathcal N(y)$ holds for every $c \geq 0$ and $y \in \mathbb H$, 
\item the set $  \{ y \in \mathbb H \, | \, \mathcal N(y) \leq 1 \}$ is compact in $\mathbb H$. 
\end{enumerate}
\begin{remark} 
From properties (i)--(iii) we can conclude that any function in $\mathfrak U^\varrho$ is lower semi-continuous on $\mathbb H$.
Furthermore, we can extend a function $\mathcal N \in \mathfrak U^\varrho$ to a $\mathcal B ( \mathbb X^*)/ \mathcal B ([0, \infty])$-measurable one on $\mathbb X^*$ by setting $\mathcal N(y)=\infty$ for $y \in \mathbb X^* \setminus \mathbb H$.
Note that $\mathcal N$, as a function on $\mathbb X^*$, is still lower semi-continuous since the embedding $\mathbb H \subseteq\mathbb X^*$ is continuous and compact.
\end{remark}

\noindent
\textbf{Coefficients:}
Let the mappings
\begin{align*}
  \sigma &\colon  [0,T] \times \mathbb H \longrightarrow L_2(\mathbb U; \mathbb H),
 \\ b &\colon   [0,T] \times \mathbb H \longrightarrow \mathbb X^{*}
\end{align*}
be Borel-measurable.
In order to obtain components of those coefficients that are defined on $[0,T] \times \mathbb R^\infty$ for the FPKE, we just extend $b$ and $\sigma$ by $0$ on $\mathbb R^\infty \setminus \mathbb H$.
This means, for every $i,j \in \mathbb N$, we consider the $\mathcal B ([0,T]) \otimes \mathcal B(\mathbb R^\infty) / \mathcal B(\mathbb R)$-measurable mappings
\begin{align*}
a^{ij} &\colon  [0,T] \times \mathbb R^\infty \longrightarrow \mathbb R,\\
b^{i} &\colon   [0,T] \times \mathbb R^\infty \longrightarrow \mathbb R,
\end{align*}
that are given by
\begin{align*}
  a^{ij}(t,y) := \begin{cases}  \frac12 \langle \sigma(t,y) \sigma(t,y)^* e_i,  e_j \rangle_{\mathbb H} ,  &(t,y) \in [0,T] \times \mathbb H, \\ 0,   &(t,y) \in [0,T] \times \mathbb R^\infty \setminus \mathbb H \end{cases}
\end{align*}
and
\begin{align*}
b^{i}(t,y) := \begin{cases} {}_{\mathbb X^*}\langle b(t,y), e_i \rangle_{\mathbb X} ,  &(t,y) \in [0,T] \times \mathbb H, \\ 0,   &(t,y) \in [0,T] \times \mathbb R^\infty \setminus \mathbb H. \end{cases}
\end{align*}
In addition, we set 
\begin{align}\label{eq:lFPKE:b_n-A_n}
 b_n:= (b^1, \ldots, b^n) \text{ and } A_n:=(a^{ij})_{1 \leq i,j \leq n}
\end{align}
as well as
\begin{align*}
 A:=(a^{ij})_{1 \leq i,j < \infty}.
\end{align*}

\begin{remark}
We note that $a^{ij}$ and $b^i$, regardless of our choice to simply extend them by $0$ on $\mathbb R^\infty \setminus \mathbb H$, will still be admissible mappings to satisfy all necessary assumption from Subsection \ref{subsection:linear-assumptions} below, because those assumptions are either imposed on $\mathbb H_n$ anyway or remain unchanged as e.g.~symmetry or growth.
\end{remark}

\subsection{Equation}
Let $x_0 \in \mathbb H$ and denote by $\delta_{x_0}$ the Dirac measure at this point.
We will, for simplicity, reduce our calculations to this choice of an initial measure, but note that by integrating over all these measures, we can generalize our results, since the FPKE depends linearly on the initial condition.
Note that this will not be possible anymore in the nonlinear case in Sections \ref{section:framework-nonlinear} and \ref{section:nonlinear-results}.

\smallskip
\noindent
\textbf{Kolmogorov operator and FPKE:}
Let $L$ be the Kolmogorov operator, acting on finitely based functions $\varphi \in \mathcal FC^2( \{e_i\})$, which is given by
\begin{align*}
  L \varphi (t,y) = \sum\limits_{i,j= 1}^{\infty} a^{ij}(t,y) \partial_{e_i} \partial_{e_j} \varphi(y) + \sum\limits_{i= 1}^{\infty} b^{i}(t,y) \partial_{e_i} \varphi(y),
\end{align*}
for $(t,y) \in [0,T] \times \mathbb R^\infty$.
\begin{remark}
As usual, $L$ obviously also acts on finitely based functions that are in addition explicitly depending on time, because this time-dependence is ``irrelevant'' for the partial derivatives appearing in the operator.
But we will not need the often used classes of time-dependent test functions and rather mostly apply $L$ to functions $\varphi \in \mathcal FC_{c}^\infty(\{e_i\})$ in the following.
\end{remark}

Consider the following shorthand notation for an infinite-dimensional linear Fokker--Planck--Kolmogorov equation associated to the Kolmogorov operator $L$, which is a Cauchy problem of the form 
\begin{align}\label{eq:FPKE-linear}\tag{FPKE}
\begin{split}
\partial_t \mu_t &= L^* \mu_t,
\\ \mu_{0} &= \delta_{x_0},
\end{split}
\end{align}
where $L^*$ is the formal adjoint of $L$ and the solution $(\mu_t)_{t \in [0,T]}$ is a path of Borel probability measures on $\mathbb R^\infty$.

\subsection{Notion of solution}
Let us introduce the notion of a probability solution to Equation \eqref{eq:FPKE-linear} and the notion of a martingale solution to the martingale problem associated to the same operator $L$ in the sense of Stroock--Varadhan.

\begin{definition}[probability solution]\label{def:linear-FPKE-notion-solution}
A path $(\mu_t)_{t \in [0,T]}$ of Borel probability measures on $\mathbb R^\infty$ is called probability solution to Equation \eqref{eq:FPKE-linear} if the following conditions hold.
\begin{enumerate}[label=(\roman*), leftmargin=30pt]
 \item\label{condition:probability-solution-i-L1} The functions $a^{ij}$, $b^{i}$ are integrable with respect to the measure $\mu_t \, \mathrm{d}t$, i.e.
 \begin{align*}
    a^{ij}, b^{i} \in L^1([0,T]  \times \mathbb R^\infty, \mu_t \, \mathrm{d}t).
 \end{align*}
 \item\label{condition:probability-solution-ii-integral}
 For every function $\varphi\in \mathcal FC_c^\infty ( \{e_i\} )$ we have
\begin{align}\label{eq:lFPKE-solution-equivalent-no-time-dependence}
    \int_{\mathbb R^\infty}  \varphi(y) \, \mu_t(\mathrm{d}y)=  \int_{\mathbb R^\infty} \varphi(y) \, \delta_{x_0}(\mathrm{d}y) + \int_0^{t} \int_{\mathbb R^\infty}  L \varphi(s,y)  \,  \mu_s(\mathrm{d}y) \, \mathrm{d}s
\end{align}
for $\mathrm{d}t$-a.e. $t \in [0,T]$.
\end{enumerate}
\end{definition}

\begin{definition}[martingale solution] \label{def:martingale-solution}
A probability measure $P \in \mathcal P (\Omega)$ is called martingale solution to the martingale problem with coefficients $b$ and $\sigma$ and initial value $x_0 \in \mathbb H$ if the following conditions hold.
\begin{enumerate}[label=\textbf{(M\arabic*)}, leftmargin=40pt]
 \item \label{Bedingung-1-martingale-solution} $P \big[ x(0)=x_0 \big] = 1$ and
 \\ $P \Big[ x \in \Omega \, \Big| \, \text{For } \mathrm{d}s\text{-a.e.~} s\in[0,T]: x(s)\in \mathbb H \,  \text{ and }$
  \\ $~\hspace*{3em}\displaystyle\int_0^T \big\| b(s, x(s) \big) \big\|_{\mathbb X^*} \, \mathrm{d} s + \displaystyle\int_0^T \big\| \sigma (s, x(s)\big) \big\|^2_{L_2 (\mathbb U ; \mathbb H)} \, \mathrm{d} s < \infty \Big]=1$.
 \item \label{Bedingung-2-martingale-solution} For every function $f \in \mathcal FC_c^\infty( \{e_i \})$ the process
\begin{align*}
  \mathbb M^f(t,x):= f(x(t)) - f(x_0) - \int_0^t Lf(s,x(s)) \, \mathrm{d}s, \quad t \in [0,T],
\end{align*}
is an $(\mathcal F_t)$-martingale with respect to $P$.
\end{enumerate}
\end{definition}

It is important to note that in \cite[Definition 3.1, p.\ 1730]{GRZ09} Condition (M2) in the notion of a martingale solution is stated in a ``weak formulation'' involving the inner product.
In fact, by using Itô's formula we can directly show that this implies our Condition \ref{Bedingung-2-martingale-solution} used in Definition \ref{def:martingale-solution}.
Furthermore, we can completely drop Condition (M3) from \cite{GRZ09} as a requirement for being a solution because we can simply transform it into an a priori energy estimate as already done in \cite[Lemma 3.1, p.\ 368]{RZZ15}.

Let us now state what we exactly mean by a martingale problem in the original sense of Stroock--Varadhan arising from given coefficients $b$ and $\sigma$ and an initial value $x_0 \in \mathbb H$.
In concrete terms, we will consider the following problem:
\begin{align}\label{eq:martingale-problem}\tag{MP}
\begin{tabular}{c}
 \text{Existence of a martingale solution $P \in \mathcal P(\mathbb S)$ in the sense of}\\
 \text{Definition \ref{def:martingale-solution} for coefficients $b$ and $\sigma$ and with initial}\\
 \text{value $x_0 \in \mathbb H$},
\end{tabular}
\end{align}
where $P \in \mathcal P(\mathbb S)$ means that we are explicitly searching for solutions that also require paths from the path space $C \big( [0, T]; \mathbb X^* \big)$ to be of class $L^p ([0, T]; \mathbb H)$.

\begin{remark}
We would like to stress that we do not focus on the, in the literature often naturally established, link to weak solutions of infinite-dimensional stochastic differential equations (see e.g.~\cite[Theorem 2.2, p.\ 364]{RZZ15}, which is substantially using \cite[Theorem 2, p.\ 1007]{Ond05}), and only consider martingale problems in the original sense of Stroock--Varadhan.
\end{remark}

\subsection{Assumptions}\label{subsection:linear-assumptions}
Before stating the necessary assumptions on the coefficients, let us quickly recall the meaning of a compact and a non-degenerate function.
\begin{definition}[compact function, see e.g.~{\cite[Definition 2.3.1, p.\ 62]{BKRS15}}]
A real-valued function $f$ on a topological space is called compact if the sublevel sets $\{ f \leq R \}$ are compact for any $R \in \mathbb R$.
\end{definition}

\begin{definition}[non-degenerate function, see e.g.~{\cite[p.\ 410]{BDR08-CPDE}}]
A compact function $f \in C^2(\mathbb R^n)$ is called non-degenerate if there exists a sequence $(c_k)_{k \in \mathbb N}$ of numbers with $c_k \xrightarrow[k \rightarrow \infty]{} \infty$ such that the level sets $f^{-1}(c_k)= \{ y \in \mathbb R^n \mid f(y) = c_k \}$ are $C^1$-surfaces.
\end{definition}

The following assumptions on our coefficients are, up to some minor modifications to ensure applicability of the used finite-dimensional results, directly taken from our main references.
Note that Assumptions \ref{Assumption:lFPKE-Theta}--\ref{Assumption:lFPKE-Coefficients} will only be used for Lemma \ref{thm:SP-onH-linear} (see Section \ref{section:auxiliary} below), but are already listed here for completeness.

\begin{enumerate}[label=\textbf{(H\arabic*)}, leftmargin=40pt]
\item \label{Assumption:lFPKE-sym-nonneg} For all $n \in \mathbb N$, the matrices $A_n= (a^{ij})_{1 \leq i,j \leq n}$ are symmetric and nonnegative definite.
\item \label{Assumption:lFPKE-Theta} Let $\Theta \colon \mathbb R^\infty \longrightarrow [0, \infty]$ be a compact Borel function, bounded on bounded sets on each space $\mathbb H_n$, $n \in \mathbb N$, such that, for every $i \in \mathbb N$ and $j \leq i$,
\begin{itemize}
\item[$\bullet$] the functions $y \mapsto a^{ij}(t,y)$, $t \in [0,T]$, are equicontinuous on every set $\{\Theta\le R\}$ with $R < \infty$ and also on every fixed ball in each $\mathbb H_n$,
\item[$\bullet$] for every $t \in [0,T]$ the function $y \mapsto b^{i}(t,y)$ is continuous on every set $\{\Theta\le R\}$ with $R < \infty$ and also on each $\mathbb H_n$.
\end{itemize}
\item \label{Assumption:lFPKE-V} There exist numbers $M_0,C_0\geq 0$ and a compact Borel function $V \colon \mathbb R^\infty \longrightarrow [1,\infty]$ whose restrictions to $\mathbb H_n$ are of class $C^2(\mathbb H_n)$ and non-degenerate such that for all $y\in \mathbb H_n$, $n \in \mathbb N$, $t \in [0,T]$,  we have
\begin{align*}
  \sum\limits_{i,j=1}^n a^{ij}(t,y)\partial_{e_i}V(y)\partial_{e_j}V(y) &\leq M_0 V(y)^2, \\
  LV(t,y)&\leq C_0 V(y)-\Theta(y).
\end{align*}
\item \label{Assumption:lFPKE-Coefficients}  There exist constants $C_i \geq 0$ and $k_i \geq 0$ such that for all $i \in \mathbb N$ and $j \leq i$ we have
\begin{align*}
  |a^{ij}(t,y)|+|b^i(t,y)| \leq C_i V(y)^{k_i} \big( 1+\kappa_i(\Theta(y))\Theta(y) \big),
\end{align*}
for every $(t,y) \in [0,T] \times \mathbb R^\infty$, where $\kappa_i$ is a bounded nonnegative Borel function on $[0,\infty)$ with $\lim\limits_{s\to \infty}\kappa_i(s)=0$.
\end{enumerate}

\begin{enumerate}[label=\textbf{(N)}, leftmargin=40pt]
  \item \label{Assumption:N1-MP-SDE} There exists a function $\mathcal N \in \mathfrak U^p$ for some $p \geq 2$ such that for every $n \in \mathbb N$ there exists a constant $C_n \geq 0$ with
      \begin{align*}
      \mathcal N(v) \leq C_n \| v \|^p_{\mathbb H_n},
    \end{align*}
    for any $v \in \mathbb H_n$.
\end{enumerate}

\begin{enumerate}[label=\textbf{(A\arabic*)}, leftmargin=40pt]
 \item (Demicontinuity)\label{Assumption:A1-Demicontinuity-SDE} For any $v \in \mathbb X$, $t\in[0,T]$ and every sequence $(y_k)_{k \in \mathbb N}$ with $y_k \xrightarrow[k \rightarrow \infty]{} y$ in $\mathbb H$, we have 
 \begin{align*}
 \lim\limits_{k \rightarrow \infty} \, _{\mathbb X^*}\langle b(t, y_k),v \rangle_{\mathbb X} = \, _{\mathbb X^*}\langle b(t, y),v \rangle_{\mathbb X}
 \end{align*}
  and
 \begin{align*}
 \lim\limits_{k \rightarrow \infty} \big\| \sigma^* (t, y_k \big) (v) -  \sigma^* (t, y \big) (v) \big\|_{\mathbb U} = 0.
 \end{align*}
\item (Coercivity)\label{Assumption:A2-Coercivity-SDE} There exists a constant $\lambda_1 \geq 0$ such that for all $v \in \mathbb X$ and $t\in[0,T]$
 \begin{align*}
  _{\mathbb X^*}\langle b(t, v),v \rangle_{\mathbb X} \leq - \mathcal N(v) + \lambda_1 (1+ \|v\|^2_{\mathbb H})
  \end{align*}
holds.
\item (Growth)\label{Assumption:A3-Growth-SDE} There exist constants $\lambda_2, \lambda_3, \lambda_4 > 0$ and constants $\gamma' \geq \gamma > 1$ such that for all $y \in \mathbb H$ and $t\in[0,T]$ we have
  \begin{align*}
    \|b(t,y)\|^\gamma_{\mathbb X^*} &\leq \lambda_2 \, \mathcal N(y) + \lambda_3 (1+ \|y\|^{\gamma'}_{\mathbb H})
  \end{align*}
  and
  \begin{align*}
    \|\sigma(t,y)\|^2_{L_2 (\mathbb U ; \mathbb H)} &\leq \lambda_4 (1+ \|y\|^2_{\mathbb H}).
  \end{align*}
\end{enumerate}

Furthermore, in order to guarantee that our initial measure $\delta_{x_0}$ satisfies all condition assumed in \cite[Theorem 3.1, p.\ 1013]{BDRS15}, we assume that
\begin{align*}
  W_k := \sup_{n\in\mathbb N} \|V(\cdot)^k \circ \Pi_n\|_{L^1(\delta_{x_0})} = \sup_{n\in\mathbb N} | V^k(\Pi_n x_0) | <\infty 
\end{align*}
holds for all $k\in \mathbb{N}$.

\begin{remark}
We note that e.g.~in \cite[Proposition 7.1.8, p.\ 293]{BKRS15} we can find the idea for a transformation of a given Lyapunov function $V$ to one that already satisfies integrability with respect to the initial measure in the finite-dimensional setting.
Adapting this idea  would be an option to actually drop the above assumption on $W_k$.
\end{remark}

We also want to note that these assumptions are not supposed to be perfectly optimal and leave room for improvement and unification.
In particular, Assumption \ref{Assumption:A2-Coercivity-SDE} would be a prime candidate to be transformed into a Lyapunov condition similar to Assumption \ref{Assumption:lFPKE-V}.
But for a start, we impose the combination of both sets of respective assumptions, because we are confident that coefficients in potential applications like SNSE will satisfy them anyway.

\section{Restricted superposition principle for linear FPKEs}\label{section:consequences}

Before stating our restricted version of a superposition principle, let us first introduce the finite-dimensional martingale problem for the projected coefficients on $\mathbb H_n$, which we can state as
\begin{align}\label{eq:MP-Hn}\tag{$\mathrm{MP}_n$}
\hspace{-0.2em}
\begin{tabular}{c}
 \text{Existence of a martingale solution $P_n \in \mathcal P(\Omega_n)$ in the sense of}\\
 \text{Definition \ref{def:martingale-solution} for coefficients $\Pi_n b$ and $\Pi_n \sigma$ and with initial}\\
 \text{value $\Pi_n x_0 \in \mathbb H_n$}
\end{tabular}
\end{align}
and the finite-dimensional FPKE on $\mathbb H_n$, that can be written in short-hand notation as
\begin{align}\label{eq:FPKE-linear-Hn}\tag{$\mathrm{FPKE}_n$}
\begin{split}
\partial_t \mu_{t,n} &= L_n^* \, \mu_{t,n},
\\ \mu_{0,n} &= \delta_{x_0} \circ \Pi_n^{-1}.
\end{split}
\end{align}
Here, the operator $L_n$, acting on functions $\varphi \in C^{2} (\mathbb H_n)$, is given by
\begin{align*}
  L_n \varphi (t,y) = \sum\limits_{i,j= 1}^{n} a^{ij}(t,y) \partial_{e_i} \partial_{e_j} \varphi(y) + \sum\limits_{i= 1}^{n} b^{i}(t,y) \partial_{e_i} \varphi(y),
\end{align*}
for $(t,y) \in [0,T] \times \mathbb H_n$.

\bigskip
The following theorem is the name-giving \textit{restricted superposition principle} and the main result for linear FPKEs.
It is in fact a direct consequence of the proof of Lemma \ref{thm:SP-onH-linear} (see Section \ref{section:auxiliary} for the lemma and Section \ref{section:proof-of-thm1} for the proof below).
By upfront assuming existence of a probability solution to \eqref{eq:FPKE-linear} as well as all desired properties for it, we do not have to impose Assumptions \ref{Assumption:lFPKE-Theta}--\ref{Assumption:lFPKE-Coefficients}.
Instead, we only ensure the application of the finite-dimensional superposition principle directly by assumption, where in particular Assumption \ref{Assumption:lFPKE-sym-nonneg} is a part of.

\begin{theorem}\label{cor:SP-onH-linear-anySolutionwith}
 Let Assumptions \ref{Assumption:N1-MP-SDE}, \ref{Assumption:A1-Demicontinuity-SDE}, \ref{Assumption:A2-Coercivity-SDE}, \ref{Assumption:A3-Growth-SDE} and \ref{Assumption:lFPKE-sym-nonneg} be fulfilled.
 Assume there exists a probability solution $(\mu_t)_{t \in [0,T]}$ on $\mathbb H$ to Equation \eqref{eq:FPKE-linear} in the sense of Definition \ref{def:linear-FPKE-notion-solution} and a subsequence $\{ ( \mu_{t,n_k} )_{t \in [0,T]} \mid  {k \in \mathbb N} \}$ of paths of Borel probability measures on $\mathbb H_{n_k}$ with the following properties:
 \begin{itemize}
    \item[$\bullet$] The paths $(\mu_{t,n_k})_{t \in [0,T]}$, $k \in \mathbb N$, are solutions to the finite-dimensional Equations \eqref{eq:FPKE-linear-Hn} on $\mathbb H_{n_k}$ with the property that the mapping
        \begin{align*}
          t\mapsto \int_{\mathbb H_{n_k}} \zeta(y)\, \mu_{t,n_k}(\mathrm{d}y)
        \end{align*}
        is continuous on $[0,T]$ for every $\zeta\in C_c^\infty(\mathbb H_{n_k})$.
    \item[$\bullet$] For the family $( \bar\mu_{t,{n_k}} )_{k \in \mathbb N}$ of extended measures to $\mathbb H$, we have $\bar \mu_{t, n_k} \xrightarrow[k \rightarrow \infty]{w} \mu_t$ for every $t \in [0,T]$.
    \item[$\bullet$] The integrability condition
    \begin{align*}
      \int_0^T \int_{\mathbb H_{n_k}} \frac{\| \Pi_{n_k} A(t,y) \Pi_{n_k}^* \| + | \langle \Pi_{n_k} b(t,y), y \rangle_{\mathbb H_{n_k}} |}{(1+\|y\|_{\mathbb H_{n_k}})^2} \, \mu_{t,_{n_k}}(\mathrm{d}y) \, \mathrm{d}t < \infty
    \end{align*}
    holds for every $k \in \mathbb N$.
\end{itemize}
Then there exists a martingale solution $P \in \mathcal P(\mathbb S)$ to the martingale problem \eqref{eq:martingale-problem} in the sense of Definition \ref{def:martingale-solution}, for which the time-marginal laws of $P$ coincide with $\mu_t$, i.e. 
\begin{align}\label{eq:SP-onH-marginal-law-property}
  P \circ x(t)^{-1} = \mu_{t}
\end{align}
holds for every $t \in [0,T]$.
\end{theorem}

\noindent
The proof of Theorem \ref{cor:SP-onH-linear-anySolutionwith} will be given below in Section \ref{section:proof-of-thm1}. 

\begin{remark}
 Let us note, that it is not sufficient to just restrict the measures $\mu_t$ to the finite-dimensional spaces $\mathbb H_n$, by e.g.~considering the push-forward measures $\mu_t \circ \Pi_n^{-1}$, in order to get a weakly convergent subsequence, because these measures not necessarily form a solution to the Equations \eqref{eq:FPKE-linear-Hn} with coefficients $\Pi_n b$ and $\Pi_n A \, \Pi_n^*$.
 We refer to \cite[Section 10.2, p.\ 413ff]{BKRS15} for more details on this kind of equation.
\end{remark}

The next corollary will highlight the fact that we can directly conclude continuity for the mapping $t \mapsto \mu_t$ with respect to the topology generated by finitely based functions from Equation \eqref{eq:SP-onH-marginal-law-property}.
\begin{corollary}\label{cor:solution-SP-weak-cont}
For solutions $P$ and $(\mu_t)_{t \in [0,T]}$ considered in Theorem \ref{cor:SP-onH-linear-anySolutionwith}, Equation \eqref{eq:SP-onH-marginal-law-property} implies that the mapping $t \mapsto \mu_t$ from $[0,T]$ to $\mathcal P(\mathbb H)$ is continuous with respect to the topology generated by the class $\mathcal FC_c^\infty( \{e_i \})$ of finitely based functions, i.e. that the mapping
\begin{align*}
  t \mapsto \int_{\mathbb H} f(y) \,  \mu_t(\mathrm{d} y)
\end{align*}
is continuous for every $f \in \mathcal FC_c^\infty( \{e_i \})$.
\end{corollary}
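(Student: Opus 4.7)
The plan is to exploit two features of the objects from Theorem \ref{thm:SP-onH-linear}: first, that $P$ is concentrated on $\Omega = C([0,T]; \mathbb X^*)$, so every path is continuous into $\mathbb X^*$; second, that $P \circ x(t)^{-1} = \mu_t$ for every $t \in [0,T]$. Given these, continuity of $t \mapsto \int f\, d\mu_t$ should follow from a straightforward dominated convergence argument once one verifies that any $f \in \mathcal{FC}_c^\infty(\{e_i\})$ is a bounded continuous function on $\mathbb X^*$.

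The key verification goes as follows. Given $f \in \mathcal{FC}_c^\infty(\{e_i\})$, write $f(y) = g(y^1,\ldots,y^d)$ with $g \in C_c^\infty(\mathbb R^d)$. Because $e_i \in \mathbb X$, each coordinate functional $y \mapsto y^i = {}_{\mathbb X^*}\langle y, e_i\rangle_{\mathbb X}$ is continuous on $\mathbb X^*$; composing with the continuous $g$ shows that $f$ is continuous on $\mathbb X^*$, and it is globally bounded by $\|g\|_\infty$ since $g$ has compact support. Hence, for any $t \in [0,T]$ and any sequence $t_n \to t$, path continuity yields $\omega(t_n) \to \omega(t)$ in $\mathbb X^*$ for every $\omega \in \Omega$, and therefore $f(\omega(t_n)) \to f(\omega(t))$ pointwise on $\Omega$.

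Next I would apply dominated convergence with respect to $P$ (using the bound $\|g\|_\infty$ as the dominating constant) to conclude
\[
  \lim_{n \to \infty} \int_\Omega f(x(t_n))\, dP = \int_\Omega f(x(t))\, dP.
\]
Invoking Equation \eqref{eq:SP-onH-marginal-law-property} rewrites each side as an integral against $\mu_{t_n}$ and $\mu_t$ respectively, and Step 9 of the proof of Theorem \ref{thm:SP-onH-linear} (which shows that $\mu_t$ is concentrated on $\mathbb H$) allows us to restrict the domain of integration from $\mathbb X^*$ back to $\mathbb H$, giving the claimed continuity.

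There is no genuinely hard step here; the only point that deserves care is ensuring that a function originally given on $\mathbb R^\infty$ can be read as a bounded continuous function on $\mathbb X^*$, which is immediate from the embedding $\mathbb X^* \subseteq \mathbb R^\infty$ and the fact that $e_i \in \mathbb X$. The corollary is essentially just the observation that path continuity in $\mathbb X^*$, combined with the marginal identification, transfers continuity from $t \mapsto x(t)$ under $P$ to $t \mapsto \mu_t$ tested against finitely based functions.
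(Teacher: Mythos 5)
Your proposal is correct and follows essentially the same route as the paper's own proof: both arguments rest on the path continuity of $x$ in $\mathbb X^*$, the continuity and boundedness of $f=g(\,{}_{\mathbb X^*}\langle \cdot, e_1\rangle_{\mathbb X},\ldots,{}_{\mathbb X^*}\langle \cdot, e_d\rangle_{\mathbb X})$ on $\mathbb X^*$ (using $e_i\in\mathbb X$), and the marginal identity \eqref{eq:SP-onH-marginal-law-property}. You merely make explicit the dominated convergence step that the paper leaves implicit.
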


\begin{proof}
From Theorem \ref{cor:SP-onH-linear-anySolutionwith} we are given measures $\mu_t \in \mathcal P(\mathbb H)$, $t \in [0,T]$, and $P \in \mathcal P (\mathbb S)$ that satisfy Equation \eqref{eq:SP-onH-marginal-law-property}.

We know that for $P \in \mathcal P (\mathbb S)$ the canonical process $x$ on $\mathbb S$ is in particular a mapping in the path space $C \big( [0, T] ; \mathbb X^*  \big)$.
This means that for any $f \in \mathcal FC_c^\infty(\{e_i\})$, and with it some $d \in \mathbb N$ and $g \in  C_c^\infty(\mathbb R^d)$, the mapping
\begin{align*}
  t \longmapsto  g \big( {}_{\mathbb X^*}\langle x(t), e_1 \rangle_{\mathbb X} , \ldots, {}_{\mathbb X^*}\langle x(t), e_d \rangle_{\mathbb X} \big)
\end{align*}  
is continuous.
Hence, the mapping $t \mapsto \int_{\mathbb S} f(x(t)) \,  P(\mathrm{d} x)$ is continuous for any $f \in \mathcal FC_c^\infty(\{e_i\})$, which yields the assertion.
\end{proof}

\section{Auxiliary results}\label{section:auxiliary}
Let us state the mentioned crucial lemma, which can be described as a ``joint'' existence result for probability and martingale solutions which are connected through their time-marginal laws in the spirit of the superposition principle. 

\begin{lemma}\label{thm:SP-onH-linear}
Under the assumptions from Subsection \ref{subsection:linear-assumptions} there exists a probability solution $(\mu_t)_{t \in [0,T]}$ on $\mathbb H$ to Equation \eqref{eq:FPKE-linear} in the sense of Definition \ref{def:linear-FPKE-notion-solution} and a martingale solution $P \in \mathcal P (\mathbb S)$ to the associated martingale problem \eqref{eq:martingale-problem} in the sense of Definition \ref{def:martingale-solution}, for which Equation \eqref{eq:SP-onH-marginal-law-property} holds for every $t \in [0,T]$. 

In particular, the following estimates and equations hold.
For every $q \geq 1$, we have
 \begin{align}\label{eq:MP-result-estimate-solution}
  \mathbb E^P \bigg[\sup\limits_{t \in [0,T]} \|x(t)\|^{2q}_{\mathbb H} + \int_0^T \|x(t)\|^{2(q-1)}_{\mathbb H} \mathcal N(x(t)) \, \mathrm{d}t \bigg] < \infty.
 \end{align}

Furthermore, for all $t\in [0,T]$ and $k \in \mathbb N$, we have
\begin{align}\label{eq:lFPKE-estimate-sol-Wk}
\int_{\mathbb R^\infty} V(y)^k\,  \mu_t(\mathrm{d}y) + k \int_0^t \int_{\mathbb R^\infty} V(y)^{k-1} \Theta(y) \, \mu_s(\mathrm{d}y) \, \mathrm{d}s \leq N_k W_k,
\end{align}
where $N_k:=M_k e^{M_k}+1$ and $M_k:=k(C_0+(k-1)M_0)$,
as well as
\begin{align}\label{eq:lFPKE-V-finite-mu_t-1}
  \mu_t(V<\infty)=1
\end{align}
for all $t \in [0,T]$ and $\mu_t(\Theta<\infty)=1$ for $\mathrm{d}t$-a.e.~$t \in [0,T]$.

\end{lemma}

\noindent
The proof of Lemma \ref{thm:SP-onH-linear} will be given below in Section \ref{section:proof-of-thm1}. 

\begin{remark}
The statement of Corollary \ref{cor:solution-SP-weak-cont} remains valid in the setting of this lemma.
\end{remark}

\begin{remark}
Obviously, the respective assumptions from Subsection \ref{subsection:linear-assumptions} directly ensure existence for both martingale and probability solutions individually, but without any additional information (e.g.~on uniqueness) we a priori could not specify any such connection given in Equation \eqref{eq:SP-onH-marginal-law-property} while also preserving knowledge about both solutions, i.e.~Equation and Estimates \eqref{eq:MP-result-estimate-solution}--\eqref{eq:lFPKE-V-finite-mu_t-1}, gained in their construction as limits of finite-dimensional approximations.
\end{remark}

Let us stress, that our approach for this existence result implements our plan stated in the introduction to follow the main motivation behind the superposition principle.
In fact, we will see in Section \ref{section:proof-of-thm1} that we first construct $(\mu_t)_{t \in [0,T]}$ the same way as in \cite{BDRS15} and \cite{BKRS15} as a weak limit of finite-dimensional probability solutions $(\mu_{t,n})_{t\in[0,T]}$ for $n\in \mathbb N$ and generate the corresponding finite-dimensional measures $P_n$ through the time-marginals via the finite-dimensional superposition principle.
The martingale solution $P$ which is then obtained as a weak limit of those $P_n$ (by using the same methods as in \cite{GRZ09} and \cite{RZZ15}) is, therefore, generated in the sense of the superposition principle. 

In addition, we can derive from Equation \eqref{eq:MP-result-estimate-solution} even slightly more information about the probability solution than it was possible in \cite{BDRS15} and \cite{BKRS15}, since
\begin{align}
  \infty > \mathbb E^P \bigg[ \int_0^T \|x(t)\|^{2(q-1)}_{\mathbb H} \mathcal N(x(t)) \, \mathrm{d}t \bigg] = \int_0^T \|y\|^{2(q-1)}_{\mathbb H} \mathcal N(y) \, \mu_t(\mathrm{d}y)
\end{align}
holds by using the transformation theorem based on Equation \eqref{eq:SP-onH-marginal-law-property}.

Before actually proving Lemma \ref{thm:SP-onH-linear}, let us discuss the three well-known results that we will combine for it in the following.
We start with a short streamlined overview of the scheme of proof used in \cite{GRZ09} and \cite{BDRS15}, on the one hand, to recall it for the reader and, on the other hand, to show its similarity and to give a clear idea on how to make use of it.

\smallskip
\noindent
\textbf{Existence of martingale solution:} (Theorem 4.6 in \cite[p.\ 1739]{GRZ09})\\
First, the authors consider the finite-dimensional martingale problem on $\mathbb H_n$ with coefficients $\Pi_n b$ and $\Pi_n \sigma$ being created by the projections $\Pi_n$.
By using well-known results in finite dimensions (see \cite[Theorem 6.1.7, p.\ 144]{SV79}), they deduce existence of martingale solutions, i.e.~some $P_n \in \mathcal P(\Omega_n)$, for any $n \in \mathbb N$.
From there they extend $P_n$ to $\bar{P}_n \in \mathcal P(\Omega)$ and prove tightness of the family $(\bar{P}_n)_{n \in \mathbb N}$.
Then they extract a subsequence of $(\bar{P}_n)_{n \in \mathbb N}$ converging weakly to a probability measure $P \in  \mathcal P(\mathbb S)$ that is a solution to the infinite-dimensional martingale problem with coefficients $b$ and $\sigma$.

\smallskip
\noindent
\textbf{Existence of probability solution:} (Theorem 3.1 in \cite[p.\ 1013]{BDRS15})\\
First, the authors consider the finite-dimensional FPKE on $\mathbb H_n$ with coefficients $A_n$ and $b_n$, which consist of the components $a^{ij}$ and $b^i$ up to $n$ for any $n \in \mathbb N$.
They prove existence of solutions $(\mu_{t,n})_{t \in [0,T]}$ by using finite-dimensional results (see \cite[Corollary 3.4, p.\ 415]{BDR08-CPDE}).
Then, after extending the measures $\mu_{t,n}$ to $\bar \mu_{t,n}$ on $\mathbb R^\infty$ and proving tightness of the family $( \bar \mu_{t,n} )_{n \in \mathbb N}$, they extract a subsequence that is weakly converging to a probability measure $\mu_t$.
Finally, they prove that $(\mu_t)_{t \in [0,T]}$ is a probability solution to the infinite-dimensional FPKE with coefficients $A$ and $b$.

\smallskip
\noindent
\textbf{Finite-dimensional superposition principle:}
Let us quickly state the finite-dimensional superposition principle proved in \cite[Theorem 1.1, p.\ 5]{BRS19}, which is further weakening the integrability condition imposed in \cite{Tre16}.
This theorem is easier to use in our particular setting, making it our reference of choice for applying the superposition principle to probability solutions in finite dimensions later.
First, let us recall the necessary assumptions stated in \cite{BRS19}:
\begin{enumerate}[label=\textbf{(S\arabic*)}, leftmargin=40pt]
  \item \label{Assumption:SPP-S0-A-symmetric} The diffusion matrix $A_n=(a^{ij})_{1 \leq i,j \leq n}$ is symmetric and nonnegative definite.
  \item \label{Assumption:SPP-S1-coefficients-a-b} For every ball $U \subseteq \mathbb R^n$ we have
      \begin{align*}
        a^{ij}, b^{i} \in L^1([0,T] \times U, \mu_{t,n} \, \mathrm{d}t).
      \end{align*}  
  \item \label{Assumption:SPP-S2-integrability-a-b} The integrability condition
        \begin{align*}
          \int_0^T \int_{\mathbb R^n} \frac{\|A_n(t,y)\| + | \langle b_n(t,y), y \rangle_{\mathbb R^n} |}{(1+\|y\|_{\mathbb R^n})^2} \, \mu_{t,n}(\mathrm{d}y) \, \mathrm{d}t < \infty
        \end{align*}
       holds.
\end{enumerate}

Their finite-dimensional superposition principle can then be written as follows:
\begin{theorem}[see~{\cite[Theorem 1.1, p.\ 5]{BRS19}}]\label{thm:SPP-Bogachev-Roeckner-Shaposhnikov-superposition-2019}
  Suppose that $( \mu_{t,n} )_{t \in [0,T]}$ is a ``narrowly continuous'' solution to the finite-dimensional FPKE on $[0,T]$ with initial measure $\nu$ and Assumptions \ref{Assumption:SPP-S0-A-symmetric} -- \ref{Assumption:SPP-S2-integrability-a-b} are fulfilled.
  Then there exists a Borel probability measure $P_n^{\nu}$ on $C([0,T];\mathbb R^n)$ such that the following properties hold:
  \begin{enumerate}[label=\textbf{(m\arabic*)}, leftmargin=40pt]
    \item\label{cond:SP-Rd-MS-BRS19-1} For all Borel sets $B \subseteq \mathbb R^n$ we have $P_n^{\nu} \big[ x_n \in C([0,T];\mathbb R^n) \mid x_n(0) \in B \big] = \nu(B)$.
    \item\label{cond:SP-Rd-MS-BRS19-2} For every function $f \in C^\infty_c (\mathbb R^n)$, the function
          \begin{align*}
            (t,x_n) \longmapsto f(x_n(t)) - f(x_n(0)) - \int_0^t L f(s, x_n(s)) \, \mathrm{d}s
          \end{align*}
          is a martingale with respect to the measure $P_n^{\nu}$ and the natural filtration $\mathcal F_t^{(n)}= \sigma(x_n(s) \mid s\in [0,t])$.  
    \item\label{cond:SP-Rd-MS-BRS19-3} For every function $f \in C^\infty_c (\mathbb R^n)$, the equality
          \begin{align*}
            \int_{\mathbb R^n} f(y) \,  \mu_{t,n}(\mathrm{d}y) = \int_{C([0,T];\mathbb R^n)} f(x_n(t)) \, P_n^{\nu} ( \mathrm{d} x_n)
          \end{align*}
          holds for all $t \in [0,T]$.
  \end{enumerate}
\end{theorem}
Here, the term ``narrowly continuous'' means that we have continuity with respect to the weak topology.

\section{Proofs of Lemma \ref{thm:SP-onH-linear} and Theorem \ref{cor:SP-onH-linear-anySolutionwith}}\label{section:proof-of-thm1}
A more elaborate version with all details can be found in \cite[Section 6.4, p.\ 81ff]{Die20}.
\begin{proof}[Proof of Lemma \ref{thm:SP-onH-linear}]
Let us divide the proof into nine steps.

\smallskip
\noindent
\textbf{Step 1: Starting point}\\
We are given an initial value $x_0 \in \mathbb H$ and coefficients $b$ and $\sigma$ on $[0,T] \times \mathbb H$ which directly allow us to study the martingale problem \eqref{eq:martingale-problem} on $\mathbb H$.
As described in Section \ref{section:framework}, we then also consider the components $b^i$ and $a^{ij}$ that are extended to $[0,T] \times \mathbb R^\infty$ by $0$ for Equation \eqref{eq:FPKE-linear} on $[0,T] \times \mathbb R^\infty$ with initial measure $\delta_{x_0}$ at the same time.
Note that these extensions still satisfy all assumptions that we have imposed in Subsection \ref{subsection:linear-assumptions}.

Now, for every $n \in \mathbb N$, we project the coefficients and initial value/measure down onto $\mathbb H_n$ via the projections $\Pi_n$ to obtain coefficients $\Pi_n b$ and $\Pi_n \sigma$ for the finite-dimensional martingale problem \eqref{eq:MP-Hn} as well as $\Pi_n b$ and $\Pi_n A \, \Pi_n^*$ for the finite-dimensional Equation \eqref{eq:FPKE-linear-Hn}.

Since we have assumed \ref{Assumption:lFPKE-sym-nonneg}--\ref{Assumption:lFPKE-Coefficients}, we can conclude, as in \cite[p.\ 1014]{BDRS15}, existence of solutions $(\mu_{t,n})_{t \in [0,T]}$ to Equation \eqref{eq:FPKE-linear-Hn} for any $n \in \mathbb N$ with the property that the function
\begin{align}\label{eq:lFPKE-continuity-fin-dim-solutions}
  t\mapsto \int_{\mathbb H_n} \zeta(y)\, \mu_{t,n}(\mathrm{d}y)
\end{align}
is continuous on $t\in [0,T]$ for every $\zeta\in C_c^\infty(\mathbb H_n)$ by using \cite[Corollary 3.4, p.\ 415]{BDR08-CPDE}.
In fact, the coefficients $A_n$ and $b_n$ are exactly $\Pi_n b$ and $\Pi_n A \, \Pi_n^*$ in our setting.
To those probability measures $\mu_{t,n}$ on $\mathbb H_n$ we will now apply the superposition principle.

\bigskip
\noindent
\textbf{Step 2: Application of Theorem \ref{thm:SPP-Bogachev-Roeckner-Shaposhnikov-superposition-2019}}\\
Let us fix some $n \in \mathbb N$ for the moment and specify how to exactly apply the finite-dimensional superposition principle on $\mathbb H_n$.
We start with a solution $(\mu_{t,n})_{t \in [0,T]}$ to Equation \eqref{eq:FPKE-linear-Hn} with initial distribution $\delta_{x_0} \circ \Pi_n^{-1}$.
Now, let us check the necessary assumptions for using Theorem \ref{thm:SPP-Bogachev-Roeckner-Shaposhnikov-superposition-2019}.
Assumption \ref{Assumption:SPP-S0-A-symmetric} follows from Assumption \ref{Assumption:lFPKE-sym-nonneg}, Assumption \ref{Assumption:SPP-S1-coefficients-a-b} is fulfilled, since we have $a^{ij}, b^{i} \in L^1_{\mathrm{loc}}(\mu_{t,n} \, \mathrm{d}t)$ for our solution $(\mu_{t,n})_{t \in [0,T]}$ by definition, and Assumption \ref{Assumption:SPP-S2-integrability-a-b} for the projected coefficients $\Pi_n b$ and $\Pi_n A \, \Pi_n^*$ can be derived by in particular using Assumption \ref{Assumption:lFPKE-Coefficients}.

Note that there are two small but apparent differences in the notions of a probability solution in finite-dimensions used in \cite{BRS19} and \cite{BDRS15} (which is in fact constructed in the sense of \cite[p.\ 397f]{BDR08-CPDE}).
First, the finite-dimensional analogue of Equation \eqref{eq:lFPKE-solution-equivalent-no-time-dependence} has to hold for every $t \in [0,T]$ in \cite{BRS19}.
This can be concluded by using Lemma 2.1 in \cite[p.\ 399]{BDR08-CPDE} (including the explanation about the limit on p.\ 400).
Second, continuity with respect to the weak topology of a probability solution is part of its definition in \cite{BRS19}.
For a solution constructed in \cite{BDRS15}, this follows from Equation \eqref{eq:lFPKE-continuity-fin-dim-solutions} by a standard approximation argument enlarging the space of test functions.

Therefore, we can apply Theorem \ref{thm:SPP-Bogachev-Roeckner-Shaposhnikov-superposition-2019} and conclude that there exists a probability measure $P_n$ on $\Omega_n$ such that Conditions \ref{cond:SP-Rd-MS-BRS19-1} and \ref{cond:SP-Rd-MS-BRS19-2} hold.
In addition, $P_n$ also satisfies Condition \ref{cond:SP-Rd-MS-BRS19-3}, i.e.~for the time-marginal laws we have
\begin{align*}
  P_n \circ x_n(t)^{-1} = \mu_{t,n}
\end{align*}
for every $t \in [0,T]$.
This implies in particular, that $P_n$ is a martingale solution to the martingale problem \eqref{eq:MP-Hn} satisfying Conditions \ref{Bedingung-1-martingale-solution} and \ref{Bedingung-2-martingale-solution} of Definition \ref{def:martingale-solution}.

\bigskip
\noindent
\textbf{Step 3: Tightness of $(\bar P_n)_{n \in \mathbb N}$}\\
Collect the family $(P_n)_{n \in \mathbb N}$ of all probability measures obtained by the application of the superposition principle for each $n \in \mathbb N$.
Since they are solutions to \eqref{eq:MP-Hn} satisfying Conditions \ref{Bedingung-1-martingale-solution} and \ref{Bedingung-2-martingale-solution} (and by using e.g.~an adaption of the proof in \cite[Chapter VII, \S 2, p.\ 293ff]{RY99} to obtain the ``weak formulation'' of condition (M2) in \cite{GRZ09}), we are actually in the same situation as in the proof of Theorem 4.6 in \cite[p.\ 1739ff]{GRZ09} (see also the proof of Theorem 2.1 in \cite[p.\ 364ff]{RZZ15}).

There (see p.\ 1739), the authors conclude existence of such probability measures (with no additional property except for being a solution to \eqref{eq:MP-Hn}) from a finite-dimensional result in the appendix, which is based on \cite[Theorem 6.1.7, p.\ 144]{SV79}.
In our case, these solutions are just directly ``created'' by the superposition principle.
Hence, since we have assumed \ref{Assumption:A1-Demicontinuity-SDE}--\ref{Assumption:A3-Growth-SDE}, we can repeat all calculations including the extension of $P_n$ to $\bar P_n$ (see p.\ 1741) and the proof of tightness of the family $(\bar P_n)_{n \in \mathbb N}$ (see p.\ 1742).

\bigskip
\noindent
\textbf{Step 4: Tightness of $(\bar\mu_{t,n})_{n \in \mathbb N}$}\\
After extending the measures $\mu_{t,n}$ on $\mathbb H_n$ to $\bar\mu_{t,n}$ on $\mathbb R^\infty$, we can simply follow \cite[p.\ 1014f]{BDRS15} and prove tightness of the family of probability measures $(\bar\mu_{t,n})_{n \in \mathbb N}$ for every fixed $t\in [0,T]$ the same way as it is done there.

\bigskip
\noindent
\textbf{Step 5: Weak convergence on a joint subsequence}\\
By using a ``diagonal argument'', we can modify the index set of the two tight families, which is necessary for selecting a joint index set on which both subsequences converge to a limit helping us to prove that Equation \eqref{eq:SP-onH-marginal-law-property} holds.
Recall that a subset of a tight set of measures is by definition still a tight set and we lose no additional properties by dropping some indices.
More importantly, we can verify that both proofs remain unchanged after that point by considering a smaller index set.

Hence, we can choose a set of indices for which both tight families have a convergent subsequence with the same indices.
Note that we ensure in the process, that the subsequence of probability solutions converges weakly for any $t \in [0,T]$ (see \cite[p.\ 54ff]{Die20} for more details).
Let us, for simplicity, denote these joint subsequences by $(\bar P_{n_k})_{k \in \mathbb N}$ and $ ( \bar\mu_{t,{n_k}} )_{k \in \mathbb N}$ and their limits by $P$ and $\mu_t$, respectively.

\bigskip
\noindent
\textbf{Step 6: $(\mu_t)_{t \in [0,T]}$ is a solution}\\
For proving that $(\mu_t)_{t \in [0,T]}$ is a probability solution to Equation \eqref{eq:FPKE-linear} on $\mathbb R^\infty$ in the sense of Definition \ref{def:linear-FPKE-notion-solution} we can again follow the proof in \cite{BDRS15} (starting on p.\ 1015).
In particular, Estimate \eqref{eq:lFPKE-estimate-sol-Wk} and Equation \eqref{eq:lFPKE-V-finite-mu_t-1} hold.

\bigskip
\noindent
\textbf{Step 7: $P$ is a solution}\\
We can carry over the calculations from \cite{GRZ09} (see p.\ 1736ff) in order to prove that $P \in \mathcal P (\mathbb S)$ is a solution to the martingale problem \eqref{eq:martingale-problem} with initial measure $\delta_{x_0}$.
In particular, Estimate \eqref{eq:MP-result-estimate-solution} holds.

\bigskip
\noindent
\textbf{Step 8: Time-marginal laws for the limit}\\
Finally, we have to prove that
\begin{align*}
  P \circ x(t)^{-1} = \mu_{t}
\end{align*}
holds for every $t \in [0,T]$.
In fact, let $f \in \mathcal F$, where $\mathcal F \subseteq \mathcal FC_c^\infty (\{e_i\})$ is a measure-separating family on $\mathbb R^\infty$ (always exists, see Lemma \ref{Lemma:count_sep_familiy_measure_sep} in the appendix).
Then $f$ is of the form
\begin{align*}
  f(y) &=  g \big(  y^1, \ldots, y^d \big), \quad y \in \mathbb R^\infty,
\end{align*}
for some $d \in \mathbb N$ and $g \in C_c^\infty (\mathbb R^d)$.

Note that Condition \ref{cond:SP-Rd-MS-BRS19-3} not only holds for functions in $C_c^\infty (\mathbb R^n)$, but also by approximation for functions in $C_b^\infty (\mathbb R^n)$.
Furthermore, for $n \geq d$, a function in $C_c^\infty (\mathbb R^d)$ treated as a function on $\mathbb R^n$ is of class $C_b^\infty (\mathbb R^n)$.

Since we have $\bar{\mu}_{t, n_k} \xrightarrow[k \longrightarrow \infty]{w} \mu_t$ on $\mathbb R^\infty$, we know that
\begin{align*}
  \int_{\mathbb R^\infty} h(y) \, \mu_t(\mathrm{d}y) = \lim\limits_{k \rightarrow \infty}  \int_{\mathbb R^\infty} h(y) \,  \bar{\mu}_{t, n_k} (\mathrm{d}y)
\end{align*}
is fulfilled for every $h \in C_b(\mathbb R^\infty)$, i.e.~in particular for $f$.
In addition, we have that $\bar{P}_{n_k} \xrightarrow[k \longrightarrow \infty]{w} P$ on $\Omega$, which means that
\begin{align*}
  \int_{ \Omega} h(\omega) \,  P (\mathrm{d}\omega) = \lim\limits_{k \rightarrow \infty} \int_{ \Omega} h(\omega) \, \bar{P}_{n_k} (\mathrm{d}\omega) 
\end{align*}
 holds for every $h \in C_b(\Omega)$.
Consequently, it is also true for the mapping given by $\omega \longmapsto  g \big(  \omega(t)^1, \ldots, \omega(t)^d \big)$ for every $t \in [0,T]$.

Then we obtain
\begin{align*}
  \int_{\mathbb R^\infty} &f(y) \, \mu_t(\mathrm{d}y) 
     = \lim\limits_{k \rightarrow \infty}  \int_{\mathbb R^\infty} g \big(  y^1, \ldots, y^d \big) \,  \bar{\mu}_{t, n_k} (\mathrm{d}y)
  \\ &= \lim\limits_{k \rightarrow \infty}  \int_{\mathbb H_{n_k}} g \big(  y^1, \ldots, y^d \big) \,  \mu_{t, n_k} (\mathrm{d}y)
  \\ &= \lim\limits_{k \rightarrow \infty}  \int_{ \Omega_{n_k}} g \big(  \omega(t)^1, \ldots, \omega(t)^d \big) \,  P_{n_k} (\mathrm{d}\omega)
  \\ &= \lim\limits_{k \rightarrow \infty}  \int_{ \Omega} g \big(  \omega(t)^1, \ldots, \omega(t)^d \big) \,  \bar{P}_{n_k} (\mathrm{d}\omega) 
  \\ &= \int_{ \Omega} g \big(  \omega(t)^1, \ldots, \omega(t)^d \big) \,  P (\mathrm{d}\omega)
  \\ & = \int_{ \Omega} f(\omega(t)) \,  P (\mathrm{d}\omega) 
      = \int_{ \mathbb S} f(x(t)) \,  P (\mathrm{d}x)
\end{align*}
for any $t \in [0,T]$.
Since $f \in \mathcal F$ separates measures on $\mathbb R^\infty$ (and all of its subsets), the assertion follows.

\bigskip
\noindent
\textbf{Step 9: $\mu_t$ are probability measures on $\mathbb H$}\\
From Estimate \eqref{eq:MP-result-estimate-solution} we can follow that 
\begin{align*}
  \mathbb E^P \Big[\sup\limits_{t \in [0,T]} \|x(t)\|^{2q}_{\mathbb H} \Big] < \infty
\end{align*}
holds for every $q \geq 1$, where we made use of the lower semi-continuity of the norm $\| \cdot \|_{\mathbb H}$ as an extended function on $\mathbb X^*$ and, therefore, of the supremum.
Consequently, $P \circ x(t)^{-1}$ is a probability measure on $\mathbb H$ for every $t \in [0,T]$, hence by Step 8 so is $\mu_t$.
\end{proof}

\begin{proof}[Proof of Theorem \ref{cor:SP-onH-linear-anySolutionwith}]
We can just repeat the proof of Lemma \ref{thm:SP-onH-linear}, because this time we are simply given an explicit family $ \{ (\mu_{t,{n_k}})_{t \in [0,T]} \mid k \in \mathbb N\}$ of solutions to the finite-dimensional Equations \eqref{eq:FPKE-linear-Hn} on $\mathbb H_{n_k}$ for which we already know that, for $k\rightarrow \infty$, $( \bar\mu_{t,{n_k}} )_{t \in [0,T]}$ converges weakly to the given solution $(\mu_t)_{t \in [0,T]}$ of Equation \eqref{eq:FPKE-linear}.

In particular, all steps necessary to derive finite-dimensional probability solutions are redundant, because we have directly assumed all desired properties for $(\mu_t)_{t \in [0,T]}$ and $(\bar \mu_{t,{n_k}})_{t \in [0,T]}$, $k \in \mathbb N$. 
Furthermore, we can apply the finite-dimensional superposition principle, because we have ensured Condition \ref{Assumption:SPP-S2-integrability-a-b} directly by assumption. 
\end{proof}

\section{Application to $d$-dimensional stochastic Navier--Stokes equations}\label{section:navier-stokes}
The articles \cite{BDRS15}, \cite{BKRS15}, \cite{GRZ09} and \cite{RZZ15} contain extensive calculations on $d$-dimensional SNSEs that remain valid in our case and to which we refer for all details.

We will instead focus on the connection between the part on FPKEs in \cite[Example 3.5, p.\ 1018f]{BDRS15}, which can partly also be found in \cite[Example 10.1.6, p.\ 411f and Example 10.4.3, p.\ 425f]{BKRS15}, and the part on martingale problems in \cite[Chapter 6, p.\ 1749ff]{GRZ09} and \cite[Section 5.1, p.\ 377f]{RZZ15} in order to fit everything into our combined framework.

Let us begin with recalling the framework of \cite{BDRS15}.
In particular, we will state the stochastic differential equation under consideration.
We note, that it should be regarded as a heuristic expression with no need for further interpretation because we are rather interested in the coefficients of the equation that we need for the martingale problem as well as in the specific form of the corresponding Kolmogorov operator $L$ that can be derived from them (recall that Lemma \ref{thm:SP-onH-linear} does not explicitly involve the SDE).

\paragraph{Stochastic Navier--Stokes equations \emph{(see \cite{BDRS15})}.}
Let $d \in \mathbb N$, $T>0$ and let $D \subseteq \mathbb{R}^d$ be a bounded domain with smooth boundary.
The stochastic Navier--Stokes equation under consideration is formally written as
\begin{align}\label{eq:Navier-Stokes}\tag{SNS}
\begin{split}
  \mathrm{d} u (t,z)=  \Pi_{\mathbb H} \Big( \Delta_z &u(t,z) - \sum_{j=1}^d u^j(t,z) \partial_{z_j} u(t,z) \Big) \, \mathrm{d}t  + \sqrt{2} \, \mathrm{d} W(t,z),
\end{split}
\end{align}
where we have chosen the ``force'' $F$ for simplicity to be $0$, and are hence in the case of the ``classical'' stochastic Navier--Stokes equation.

\paragraph{Setting \emph{(see \cite{BDRS15}, \cite{GRZ09})}.}
For the space $L^2(D; \mathbb R^d)$, we denote the $L^2$-inner product by $\langle \cdot, \cdot \rangle_{L^2}$ and the corresponding $L^2$-norm by $\| \cdot \|_{L^2}$.
Let 
\begin{align*}
  \mathcal V:= C^\infty_{c, \text{div}} (D)^d  := \big\{ u=(u^1,\ldots,u^d) \, \big| \, u^j\in C_c^{\infty}(D), \Div u=0 \big\}
\end{align*}
be the space of all smooth $d$-dimensional vector fields on $D$ with compact support in $D$ and divergence free.
By $H_0^{2,1}(D)$ we denote the closure of $C^\infty_{c} (D)$ in the Sobolev space $H^{2,1}(D)$.
As in \cite{BDRS15}, we define the space 
\begin{align*}
  V_2 := H_{0, \text{div}}^{2,1}(D)^d := \big\{ u=(u^1,\ldots,u^d) \, \big| \, u^j\in H_0^{2,1}(D), \Div u=0 \big\}
\end{align*}
of $\mathbb{R}^d$-valued mappings.
Then $V_2$ is equipped with its natural Hilbert norm $\| \cdot \|_{V_2}$ given by
\begin{align*}
\|u\|_{V_2}^2:=\sum_{j=1}^d \|\nabla_z u^j\|_{L^2}^2.
\end{align*}
Note that $V_2$ is identical to the closure of $\mathcal V$ in $H_0^{2,1}(D)^d$ (see e.g.~\cite[Theorem 1.6, p.\ 18 and Remark 2.1(ii), p.\ 23]{Tem77}).
Let $\mathbb H$ be the closure of $\mathcal V$ in $H^{2,0}_{0}(D)^d$ as in \cite{GRZ09}.
Then $\mathbb H$ is clearly a separable Hilbert space and is identical to the closure of $V_2$ in $L^2(D;\mathbb{R}^d)$ as it is constructed in \cite{BDRS15}.
As in \cite{GRZ09}, let $\mathbb X$ be the closure of $\mathcal V$ in the Sobolev space $H^{2,2+d}_{0}(D)^d$ and let $\mathbb X^*$ be its dual.

It is well-known that there exist eigenfunctions $\eta_i\in \mathbb X$ of the Laplace operator $\Delta_z$ with eigenvalues $-\lambda_i^2$, i.e.~$\Delta_z \eta_i = -\lambda_i^2 \eta_i$, $\lambda>0$, such that $\{\eta_i \mid i \in \mathbb N \}$ is an orthonormal basis in $\mathbb H$.
Define $\mathbb H_n := \Span\{\eta_1, \ldots \eta_n\}$.

Altogether, we consider the embedding
\begin{align*}
  \mathbb X \subseteq V_2 \subseteq \mathbb H \subseteq V_2^* \subseteq \mathbb X^*.
\end{align*}
Furthermore, $\Pi_{\mathbb H}$ denotes the orthogonal projection onto $\mathbb H$ in $L^2(D;\mathbb{R}^d)$.
Let $W$ be a Wiener process of the form 
\begin{align*}
  W(t,z)=\sum_{i=1}^\infty \sqrt{\alpha_i} w_i(t) \eta_i(z),
\end{align*}
where $\alpha_i \geq 0$, $\sum_{i=1}^\infty \alpha_i<\infty$ and $w_i$, $i \in \mathbb N$, are independent real Wiener processes.

\paragraph{Coefficients \emph{(see \cite{BDRS15}, \cite{GRZ09}, \cite{RZZ15})}.}
For $t\in [0,T]$ and $v \in \mathcal V$ set
\begin{align*}
  b(t,v):= \Pi_{\mathbb H} \Delta_z v - \Pi_{\mathbb H} \sum_{j=1}^d v^j \partial_{z_j} v.
\end{align*}
Note that the following lemma holds, which can be proved by similar calculations as in \cite[Lemma 6.1, p.\ 1750]{GRZ09}.
\begin{lemma}[{see \cite[p.\ 378]{RZZ15}}]\label{lemma:applications}
For any $v_1,\tilde v_1, v_2, \tilde v_2 \in\mathcal V$ we have
\begin{align*}
    \big\| \Pi_{\mathbb H} \Delta_z v_1 - \Pi_{\mathbb H} \Delta_z v_2 \big\|_{\mathbb X^*} &\leq C \|v_1-v_2\|_{\mathbb H},
\\  \Big\| \Pi_{\mathbb H} \sum_{j=1}^d {\tilde v_1}^j \partial_{z_j} v_1 - \Pi_{\mathbb H} \sum_{j=1}^d {\tilde v_2}^j \partial_{z_j} v_2 \Big\|_{\mathbb X^*} &\leq C \big(\|{\tilde v_1}\|_{\mathbb H} \|v_1-v_2\|_{\mathbb H} +\|v_2\|_{\mathbb H} \|\tilde v_1-\tilde v_2\|_{\mathbb H} \big) .
\end{align*}
\end{lemma}

Since $\langle \Pi_{\mathbb H} w,\eta_i\rangle_{L^2}=\langle w,\eta_i\rangle_{L^2}$ holds for any $w\in L^2(D;\mathbb{R}^d)$,
we can, however, directly consider the components $b^i$ of our coefficient $b$ given by
\begin{align*}
b^i(t,v) 
   &= \langle \Pi_{\mathbb H} ( \Delta_z v), \eta_i\rangle_{L^2} -\sum_{j=1}^d \langle \Pi_{\mathbb H} (v^j\partial_{z_j} v), \eta_i\rangle_{L^2} =\langle v,\Delta_z \eta_i\rangle_{L^2} -\sum_{j=1}^d \langle \partial_{z_j}v, v^j\eta_i\rangle_{L^2}.
\end{align*}
It follows from the last step that those mappings are defined for every $v \in V_2$.
Since $v$ is divergence free, we can further rewrite $b^i$ by using integration by parts as
\begin{align*}
  b^i(t,v) =\langle v,\Delta_z \eta_i\rangle_{L^2} +\sum_{j=1}^d \langle v, v^j \partial_{z_j} \eta_i\rangle_{L^2},
\end{align*}
which is actually defined for all $v \in \mathbb H$.
Furthermore, for the coefficient $\sigma$, we set
\begin{align*}
\sigma^{ij} :\equiv \begin{cases} \sqrt{2 \alpha_i}, & i=j, \\0 , & i \neq j \end{cases}
\end{align*}
on $\mathbb H$ and obtain for $A$ the components
\begin{align*}
  a^{ij} :\equiv \begin{cases} \alpha_i, & i=j, \\0 , & i \neq j. \end{cases}
\end{align*}
Hence, the treatment of $\sigma$ and $A$ as constant coefficients is straightforward and for $b$ we know, that the components $b^i$ are defined on the whole space $\mathbb H$ and their representation on $V_2$ is directly derived from the stochastic Navier--Stokes equation \eqref{eq:Navier-Stokes}.
Recall that both $\mathbb H_n$ and $\mathcal E = \Span \{\eta_1, \eta_2, \ldots \}$, which is used in the definition of a martingale solution, are subspaces of $V_2$, simplifying many calculations even more.

\paragraph{Kolmogorov operator $L$.}
Now, we can consider the Kolmogorov operator $L$ given by
\begin{align*}
    L\varphi(t,u)= \sum_{i=1}^\infty \alpha_i \, \partial_{\eta_i} \partial_{\eta_i} \varphi (u) +\sum_{i=1}^\infty b^i(t,u) \,\partial_{\eta_i}\varphi (u),
\end{align*}
for any $\varphi \in \mathcal FC_c^\infty (\{ \eta_i \})$.

\paragraph{Assumptions.}
Note that we had to modify two small parts of Assumptions \ref{Assumption:lFPKE-Theta} and \ref{Assumption:lFPKE-V} in Subsection \ref{subsection:linear-assumptions} compared to those in \cite{BDRS15} and \cite{BKRS15} for being able to prove Lemma \ref{thm:SP-onH-linear}.
Let us explain that these modifications have to influence on the calculations.
First of all, since we choose the Lyapunov function $V \colon \mathbb R^\infty \longrightarrow [1,\infty]$ as
  \begin{align*}
    V(u)=\begin{cases} \|u\|_{L^2}^2+1, &u \in \mathbb H, \\ \infty, &\text{else}, \end{cases}
  \end{align*}
the restriction of $V$ to $\mathbb H_n$ is non-degenerate, as required, because it is a convex function.
Second, the function $\Theta \colon \mathbb R^\infty \longrightarrow [0,\infty]$, which we choose as
  \begin{align*}
    \Theta(u) = \begin{cases} C_1\|u\|_{V_2}^2, &u \in V_2, \\ \infty, &\text{else}, \end{cases}
  \end{align*}
is bounded on bounded sets on each space $\mathbb H_n$, which is the other modification.

Hence, Assumptions \ref{Assumption:lFPKE-sym-nonneg}--\ref{Assumption:lFPKE-Coefficients} are verified as in \cite{BDRS15} and \cite{BKRS15}.
The validation of Assumptions \ref{Assumption:N1-MP-SDE}, \ref{Assumption:A1-Demicontinuity-SDE}--\ref{Assumption:A3-Growth-SDE} remains unchanged from \cite{GRZ09} and \cite{RZZ15}.

\paragraph{Conclusion.}
 Lemma \ref{thm:SP-onH-linear} and  Theorem \ref{cor:SP-onH-linear-anySolutionwith} apply.

\section{Nonlinear framework}\label{section:framework-nonlinear}
As a generalization we will now consider nonlinear Fokker--Planck--Kolmogorov equations in infinite dimensions.

Our setting remains, for the most part, identical to Section \ref{section:framework}.
Let us focus on what exactly changes in the nonlinear case.
For some fixed $T>0$, let the mappings
\begin{align*}
  b &\colon [0,T] \times \mathbb H \times  \mathcal{P}(\mathbb H) \longrightarrow \mathbb X^*,\\
  \sigma &\colon [0,T] \times \mathbb H \times  \mathcal{P}(\mathbb H) \longrightarrow L_2(\mathbb U, \mathbb H)
\end{align*}
be Borel-measurable, where $\mathcal{P}(\mathbb H)$ denotes the space of all Borel probability measures on $\mathbb H$.
By extending those mappings by 0 as before, we again obtain mappings
\begin{align*}
  a^{ij} &\colon [0,T] \times \mathbb R^\infty \times  \mathcal{P}(\mathbb R^\infty) \longrightarrow \mathbb R,\\
  b^{i} &\colon [0,T] \times \mathbb R^\infty \times  \mathcal{P}(\mathbb R^\infty) \longrightarrow \mathbb R
\end{align*}
as coefficients. 
To be more precise, we now set
\begin{align*}
  b^{i}(t,y, \varrho) := \begin{cases} {}_{\mathbb X^*}^{}\langle b(t,y, \varrho), e_i \rangle_{\mathbb X}, &(t,y,\varrho) \in [0,T] \times \mathbb H \times  \mathcal{P}(\mathbb H), \\ 0, & \text{else} \end{cases}
\end{align*}
and
\begin{align*}
  a^{ij}(t,y, \varrho) := \begin{cases} \frac12 \langle \sigma(t,y,\varrho) \sigma(t,y,\varrho)^* e_i,  e_j \rangle_{\mathbb H} , &(t,y,\varrho) \in [0,T] \times \mathbb H \times  \mathcal{P}(\mathbb H), \\ 0, & \text{else}. \end{cases}
\end{align*}
Again, let $A(t,y,\varrho):=\big( a^{ij}(t,y, \varrho) \big)_{1 \leq i,j < \infty}$ be our diffusion matrix.

\subsection{Kolmogorov operator and nonlinear FPKE}

Let $L_{\mu_t}$ be the Kolmogorov operator, acting on functions $\varphi \in \mathcal FC^{2}(\{e_i\})$, which is given by
\begin{align*}
  L_{\mu_t} \varphi (t,y) = \sum\limits_{i,j= 1}^{\infty} a^{ij}(t, y, \mu_t) \partial_{e_i} \partial_{e_j} \varphi(y) + \sum\limits_{i= 1}^{\infty} b^{i}(t, y, \mu_t) \partial_{e_i} \varphi(y)
\end{align*}
for $(t,y) \in [0,T] \times \mathbb R^\infty$.

Consider the following shorthand notation for an infinite-dimensional nonlinear Fokker--Planck--Kolmogorov equation associated to the Kolmogorov operator $L_{\mu_t}$, which is a Cauchy problem of the form 
\begin{align}\label{FPKE-nonlinear}\tag{NFPKE}
\begin{split}
  \partial_t \mu_t &= L_{\mu_t}^{*} \, \mu_t,
\\ \mu_{0} &= \delta_{{x_0}},
\end{split}
\end{align}
where $L_{\mu_t}^*$ is the formal adjoint of the operator $L_{\mu_t}$ and the solution $(\mu_t)_{t \in [0,T]}$ is a path of Borel probability measures on $\mathbb R^\infty$.

\begin{remark}
We will, as in Section \ref{section:framework}, reduce our calculations to Dirac measures $\delta_{{x_0}}$ as initial measures.
This is again only for simplicity and without implications for general initial measures. 
However, as mentioned in Section \ref{section:framework}, it is not possible anymore to generalize our results to any initial measures by simply integrating over the $\delta_{{x_0}}$ in the nonlinear setting since the FPKE's linear dependence on the initial measure is lost.
\end{remark}

In addition, let us introduce the finite-dimensional nonlinear FPKE on $\mathbb H_n$, that can be written in short-hand notation as
\begin{align}\label{eq:FPKE-nonlinear-Hn}\tag{$\mathrm{NFPKE}_n$}
\begin{split}
\partial_t \mu_{t,n} &= L^*_{n, \mu_{t,n}} \, \mu_{t,n},
\\ \mu_{0,n} &= \delta_{x_0} \circ \Pi_n^{-1}.
\end{split}
\end{align}
Here, the operator $L_{n, \mu_{t,n}}$, acting on functions $\varphi \in C^{2} (\mathbb H_n)$, is given by
\begin{align*}
  L_{n, \mu_{t,n}} \varphi (t,y) = \sum\limits_{i,j= 1}^{n} a^{ij}(t,y, \mu_{t,n}) \partial_{e_i} \partial_{e_j} \varphi(y) + \sum\limits_{i= 1}^{n} b^{i}(t,y, \mu_{t,n}) \partial_{e_i} \varphi(y),
\end{align*}
for $(t,y) \in [0,T] \times \mathbb H_n$.

\subsection{Notion of solution}
The notion of a probability solution in the nonlinear case is analogue to Definition \ref{def:linear-FPKE-notion-solution}.
We only have to consider coefficients explicitly depending on a measure.

\begin{definition}(probability solution, nonlinear)\label{def:prob-sol-FPKE-nonlinear}
A path $(\mu_t)_{t \in [0,T]}$ of Borel probability measures on $\mathbb R^\infty$
is called probability solution to Equation \eqref{FPKE-nonlinear} if 
\begin{enumerate}[label=(\roman*), leftmargin=30pt]
 \item\label{condition:NFPKE-probability-solution-i-L1} The functions $a^{ij}$, $b^{i}$ are integrable with respect to the measure $\mu_t \, \mathrm{d}t$, i.e.
 \begin{align*}
    (t,x) \longmapsto a^{ij}(t, x, \mu_{t}), (t,x) \longmapsto b^{i}(t, x, \mu_{t}) \in L^1([0,T]  \times \mathbb R^\infty, \mu_t \, \mathrm{d}t).
 \end{align*}
 \item\label{condition:probability-solution-ii-integral} For every function $\varphi\in \mathcal FC_c^\infty (  \{e_i\} )$ we have
\begin{align*}
  \int_{\mathbb R^\infty} \varphi(y) \, \mu_t(\mathrm{d}y)  = \int_{\mathbb R^\infty} \varphi(y) \, \delta_{x_0} (\mathrm{d}y) + \int_0^t \int_{\mathbb R^\infty} L_\mu \varphi(s,y) \, \mu_s(\mathrm{d}y) \, \mathrm{d}s
\end{align*}
for every $t \in [0,T]$.
\end{enumerate}
\end{definition}

The notion of a martingale solution remains completely unchanged because, by starting the statement logic of the superposition principle with a given solution $(\mu_t)_{t\in[0,T]}$ to \eqref{FPKE-nonlinear}, we can fix it and are, therefore, back in a linear situation as it has been introduced in Section \ref{section:framework}.
To be more precise, we will simply consider martingale problems with ``special'' functions in the place of $b$ and $\sigma$ that explicitly depend on a fixed measure, i.e.~the mappings $(t,x) \longmapsto b(t, x, \mu_{t})$ and $(t,x) \longmapsto \sigma(t, x, \mu_{t})$, in the following.
Hence, the martingale problem we study can be stated as:
\begin{align}\label{eq:McKV-MP}\tag{NMP}
\begin{tabular}{c}
 \text{Existence of a martingale solution $P \in \mathcal P(\mathbb S)$ in the sense of}\\
 \text{Definition \ref{def:martingale-solution} for the coefficients $(t,x) \longmapsto b(t, x, \mu_{t})$ and}\\
 \text{$(t,x) \longmapsto \sigma(t, x, \mu_{t})$ and with initial value $x_0 \in \mathbb H$}.
\end{tabular}
\end{align}

\begin{remark}
As before, by following \cite[Theorem 2.2, p.\ 364]{RZZ15} and using \cite[Theorem 2, p.\ 1007]{Ond05}, we can establish the connection to a weak solution of the corresponding McKean--Vlasov SDE.
\end{remark}

\subsection{Nonlinear assumptions}\label{section:nonlinear-Assumptions}
The following assumptions on the coefficients $b$ and $\sigma$ are directly adapted from those in Subsection \ref{subsection:linear-assumptions} by making the estimates uniform in the newly added dependence on measures.

\begin{enumerate}[label=\textbf{(NN)}, leftmargin=40pt]
  \item \label{Assumption:nonlinear-N1} Assume there exists a function $\mathcal N \in \mathfrak U^p$ for some $p \geq 2$ such that for every $n \in \mathbb N$ there exists a constant $C_n \geq 0$ with
      \begin{align*}
      \mathcal N(y) \leq C_n \| y \|^p_{\mathbb H_n},
    \end{align*}
    for any $y \in \mathbb H_n$.
\end{enumerate}

\begin{enumerate}[label=\textbf{(NA\arabic*)}, leftmargin=40pt]
 \item (Demicontinuity)\label{Assumption:nonlinear-A1} For any $v \in \mathbb X$, $t \in [0,T]$, $\varrho \in \mathcal{P}(\mathbb H)$ and every sequence $(y_k)_{k \in \mathbb N}$ with $y_k \xrightarrow[k \rightarrow \infty]{} y$ in $\mathbb H$, we have 
 \begin{align*}
 \lim\limits_{k \rightarrow \infty} {}_{\mathbb X^*}\langle b(t, y_k, \varrho),v \rangle_{\mathbb X} = {}_{\mathbb X^*}\langle b(t, y, \varrho),v \rangle_{\mathbb X}
 \end{align*}
  and
 \begin{align*}
 \lim\limits_{k \rightarrow \infty} \big\| \sigma^* (t, y_k, \varrho ) (v) -  \sigma^* (t, y, \varrho ) (v) \big\|_{\mathbb U} = 0.
 \end{align*}

\item (Coercivity)\label{Assumption:nonlinear-A2} There exists a constant $\lambda_1 \geq 0$ such that for all $y \in \mathbb X$, $t \in [0,T]$ and $\varrho \in \mathcal{P}(\mathbb H)$
 \begin{align*}
   {}_{\mathbb X^*}\langle b(t, y, \varrho),y \rangle_{\mathbb X} \leq - \mathcal N(y) + \lambda_1 (1+ \|y\|^2_{\mathbb H})
  \end{align*}
holds.

\item (Growth)\label{Assumption:nonlinear-A3} There exist constants $\lambda_2, \lambda_3, \lambda_4 > 0$ and constants $\gamma' \geq \gamma > 1$ such that for all $y \in \mathbb H$, $t \in [0,T]$ and $\varrho \in \mathcal{P}(\mathbb H)$ we have
  \begin{align*}
    \|b(t,y, \varrho)\|^\gamma_{\mathbb X^*} &\leq \lambda_2 \, \mathcal N(y) + \lambda_3 (1+ \|y\|^{\gamma'}_{\mathbb H})
  \end{align*}
  and
  \begin{align*}
    \|\sigma(t,y, \varrho)\|^2_{L_2 (\mathbb U ; \mathbb H)} &\leq \lambda_4 (1+ \|y\|^2_{\mathbb H}).
  \end{align*}
\end{enumerate}

\begin{enumerate}[label=\textbf{(NH\arabic*)}, leftmargin=40pt]
\item \label{Assumption:nonlinear-H1} For all $n \in \mathbb N$, $\varrho \in \mathcal{P}(\mathbb H)$, the matrices $(a^{ij}(\cdot,\cdot,\varrho))_{1 \leq i,j \leq n}$ are symmetric and nonnegative definite.
\end{enumerate}

\section{Nonlinear result}\label{section:nonlinear-results}

Let us state the main result for nonlinear FPKEs, which is a direct adaption of Theorem \ref{cor:SP-onH-linear-anySolutionwith}.
To be more precise, it is a superposition principle for a given probability solution $(\mu_t)_{t \in [0,T]}$ on $\mathbb H$ to a nonlinear FPKE yielding existence of a martingale solution whose time-marginals are equal to $\mu_t$.

\begin{theorem}\label{thm:SP-nonlinear-onH}
Let the assumptions from Subsection \ref{section:nonlinear-Assumptions} be fulfilled.
Assume there exists a probability solution $(\mu_t)_{t \in [0,T]}$ on $\mathbb H$ to the nonlinear Equation \eqref{FPKE-nonlinear} in the sense of Definition \ref{def:prob-sol-FPKE-nonlinear} and a subsequence $\{ ( \mu_{t,n_k} )_{t \in [0,T]} \mid k \in \mathbb N\}$ of paths of Borel probability measures on $\mathbb H_{n_k}$ with the following properties:
 \begin{itemize}
    \item[$\bullet$] The paths $(\mu_{t,{n_k}})_{t \in [0,T]}$, $k \in \mathbb N$, are solutions to the finite-dimensional nonlinear Equations \eqref{eq:FPKE-nonlinear-Hn} on $\mathbb H_{n_k}$ with the property that the mapping
        \begin{align*}
          t\mapsto \int_{\mathbb H_{n_k}} \zeta(y)\, \mu_{t,n_k}(\mathrm{d}y)
        \end{align*}
        is continuous on $[0,T]$ for every $\zeta\in C_c^\infty(\mathbb H_{n_k})$.
    \item[$\bullet$] For the family $( \bar\mu_{t,{n_k}} )_{k \in \mathbb N}$ of extended measures to $\mathbb H$, we have $\bar \mu_{t, n_k} \xrightarrow[k \rightarrow \infty]{w} \mu_t$ for every $t \in [0,T]$.
    \item[$\bullet$] The integrability condition
    \begin{align*}
      \int_0^T \int_{\mathbb H_{n_k}} \frac{\| \Pi_{n_k} A(t,y, \mu_{t, n_k}) \Pi_{n_k}^* \| + | \langle \Pi_{n_k} b(t,y, \mu_{t, n_k}), y \rangle_{\mathbb H_{n_k}} |}{(1+\|y\|_{\mathbb H_{n_k}})^2} \, \mu_{t,_{n_k}}(\mathrm{d}y) \, \mathrm{d}t < \infty
    \end{align*}
    holds for every $k \in \mathbb N$.
\end{itemize}
Then there exists a martingale solution $P \in \mathcal P(\mathbb S)$ to the martingale problem \eqref{eq:McKV-MP} in the sense of Definition \ref{def:martingale-solution}, for which 
\begin{align*}
  P \circ x(t)^{-1} = \mu_{t}
\end{align*}
holds for every $t \in [0,T]$.
\end{theorem}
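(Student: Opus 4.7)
The plan is to adapt the proof of Corollary~\ref{cor:SP-onH-linear-anySolutionwith} by way of the ``freezing'' technique sketched in the introduction. First, fix the given probability solution $\mu = \mu_t\,\mathrm{d}t$ and define the measure-independent, time-dependent coefficients
\begin{align*}
\tilde b(t,y) := b(t,y,\mu_t), \qquad \tilde\sigma(t,y) := \sigma(t,y,\mu_t),
\end{align*}
with corresponding $\tilde a^{ij}(t,y) := a^{ij}(t,y,\mu_t)$. By Definition~\ref{def:prob-sol-FPKE-nonlinear}, $\mu$ is itself a probability solution of the linear Cauchy problem~\eqref{eq:FPKE-linear} for $(\tilde b,\tilde\sigma)$. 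Because assumptions~\ref{Assumption:nonlinear-N1}, \ref{Assumption:nonlinear-A1}--\ref{Assumption:nonlinear-A3} and~\ref{Assumption:nonlinear-H1} are uniform in $\varrho\in\mathcal{P}(\mathbb H)$, specialising to $\varrho=\mu_t$ yields \ref{Assumption:N1-MP-SDE}, \ref{Assumption:A1-Demicontinuity-SDE}--\ref{Assumption:A3-Growth-SDE} and~\ref{Assumption:lFPKE-sym-nonneg} for the frozen triple.

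Next I would apply the same reinterpretation to the sequence $(\mu_{t,n_k})_{k\in\mathbb N}$: since each $\mu_{t,n_k}$ is a fixed solution, its coefficients $\Pi_{n_k}b(\cdot,\cdot,\mu_{\cdot,n_k})$ and $\Pi_{n_k}A(\cdot,\cdot,\mu_{\cdot,n_k})\Pi_{n_k}^*$ are honest $(t,y)$-functions along that particular solution, and $\mu_{t,n_k}$ satisfies the resulting linear finite-dimensional Cauchy problem~\eqref{eq:FPKE-linear-Hn}. The integrability bullet of the theorem supplies~\ref{Assumption:SPP-S2-integrability-a-b}, \ref{Assumption:nonlinear-H1} gives~\ref{Assumption:SPP-S0-A-symmetric}, and \ref{Assumption:SPP-S1-coefficients-a-b} is automatic for any probability solution; hence Theorem~\ref{thm:SPP-Bogachev-Roeckner-Shaposhnikov-superposition-2019} produces, for every $k\in\mathbb N$, a measure $P_{n_k}\in\mathcal{P}(\Omega_{n_k})$ that is a finite-dimensional martingale solution for $\Pi_{n_k}b(\cdot,\cdot,\mu_{\cdot,n_k})$, $\Pi_{n_k}\sigma(\cdot,\cdot,\mu_{\cdot,n_k})$ and satisfies $P_{n_k}\circ x_{n_k}(t)^{-1}=\mu_{t,n_k}$.

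From there I would run Steps~3--6 and~8 of the proof of Theorem~\ref{thm:SP-onH-linear} essentially verbatim. The moment and energy bounds from \cite{GRZ09} that drive the tightness of the extensions $\bar P_{n_k}\in\mathcal{P}(\Omega)$ depend on the coefficients only through the growth and coercivity constants, which are uniform in $k$ precisely because~\ref{Assumption:nonlinear-A2} and~\ref{Assumption:nonlinear-A3} are uniform in $\varrho$. A joint-subsequence extraction as in Step~5 then yields $\bar P_{n_k}\to P$ in $\mathcal{P}(\mathbb S)$ together with $\bar\mu_{t,n_k}\to\mu_t$ weakly for every $t\in[0,T]$, and the measure-separating argument of Step~8 delivers the marginal identity $P\circ x(t)^{-1}=\mu_t$.

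The central difficulty, corresponding to Step~7, is to identify $P$ as a martingale solution in the sense of~\ref{Bedingung-2-martingale-solution} for the \emph{limiting} coefficients $b(\cdot,\cdot,\mu_\cdot)$ and $\sigma(\cdot,\cdot,\mu_\cdot)$, while the $P_{n_k}$ carry the moving coefficients $b(\cdot,\cdot,\mu_{\cdot,n_k})$, $\sigma(\cdot,\cdot,\mu_{\cdot,n_k})$; unlike in the linear case, one must exchange the weak limits of $\bar P_{n_k}$ and of $\mu_{s,n_k}$ \emph{inside} the drift and diffusion, even though~\ref{Assumption:nonlinear-A1} only furnishes demicontinuity in the spatial variable for fixed $\varrho$. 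I would handle this by leveraging the already-established marginal identity: for any finitely based $f\in\mathcal FC_c^\infty(\{e_i\})$, taking expectations under $P_{n_k}$ collapses the unconditional identity to $\int f\,\mathrm{d}\mu_{t,n_k}-\int f\,\mathrm{d}\mu_{u,n_k}$, which passes in the limit to $\int f\,\mathrm{d}\mu_t-\int f\,\mathrm{d}\mu_u=\mathbb E^P\!\left[\int_u^t L_\mu f(s,x(s))\,\mathrm{d}s\right]$ by Definition~\ref{def:prob-sol-FPKE-nonlinear} applied to $\mu$ combined with $P\circ x(s)^{-1}=\mu_s$. The conditional version is then recovered by a Stroock--Varadhan-type density argument over a countable family of $\mathcal{F}_u$-measurable cylindrical functionals, with the uniform-integrability input supplied by the growth bound~\ref{Assumption:nonlinear-A3}.
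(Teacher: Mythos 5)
Your overall strategy --- freeze $\mu$ and the $\mu_{t,n_k}$, check the linear assumptions via the uniformity in $\varrho$ of \ref{Assumption:nonlinear-N1}, \ref{Assumption:nonlinear-A1}--\ref{Assumption:nonlinear-A3}, \ref{Assumption:nonlinear-H1}, and then rerun the machinery behind Corollary \ref{cor:SP-onH-linear-anySolutionwith} --- is exactly the route the paper takes; its proof consists of precisely this freezing observation followed by an appeal to Corollary \ref{cor:SP-onH-linear-anySolutionwith}. You have, moreover, put your finger on the one place where the nonlinear case genuinely differs from the linear one: the approximating problems carry the coefficients $b(\cdot,\cdot,\mu_{\cdot,n_k})$, $\sigma(\cdot,\cdot,\mu_{\cdot,n_k})$, which move with $k$, so the identification of the limit (Step 7) cannot be copied verbatim.

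However, your proposed repair of Step 7 does not work. From the unconditional identity $\int f\,\mathrm{d}\mu_t-\int f\,\mathrm{d}\mu_u=\mathbb E^P\bigl[\int_u^t L_\mu f(s,x(s))\,\mathrm{d}s\bigr]$ one cannot recover the conditional statement required by \ref{Bedingung-2-martingale-solution}, namely $\mathbb E^P\bigl[\bigl(\mathbb M^f(t,x)-\mathbb M^f(u,x)\bigr)G\bigr]=0$ for all bounded $\mathcal F_u$-measurable $G$, by any density argument: the unconditional identity encodes only the time-marginals of $P$, i.e.\ the fact that $\mu$ solves the FPKE, and the passage from marginal data to the martingale property is exactly the nontrivial (and in general false) content of the superposition principle --- the Ornstein--Uhlenbeck example in the introduction shows that a solving marginal flow need not correspond to any martingale solution at all. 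A density argument over $\mathcal F_u$-measurable cylindrical functionals can only extend the conditional identity from a generating class of $G$'s to all of $\mathcal F_u$; it cannot manufacture it from the single case $G=1$. The limit has to be taken in the conditional identity $\mathbb E^{P_{n_k}}\bigl[\bigl(\mathbb M^f(t,x)-\mathbb M^f(u,x)\bigr)G\bigr]=0$ itself, with the moving coefficients $b(\cdot,\cdot,\mu_{\cdot,n_k})$, $\sigma(\cdot,\cdot,\mu_{\cdot,n_k})$ inside the time integral; this is what the paper implicitly does by inheriting Step 7 of the proof of Theorem \ref{thm:SP-onH-linear} (the Goldys--R\"ockner--Zhang limit procedure), with the uniformity of the assumptions in $\varrho$ supplying the bounds needed there. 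Any worry about convergence of the drift and diffusion evaluated at $\mu_{s,n_k}$ must therefore be addressed at that level, not by retreating to expectations against $G=1$.
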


As mentioned before, we follow the ideas presented in \cite{BR18a} and \cite[Section 2]{BR18b} on dealing with nonlinearity for the proof.

\begin{proof}[Proof of Theorem \ref{thm:SP-nonlinear-onH}]
Given solutions $(\mu_t)_{t\in [0,T]}$ to the nonlinear Equation \eqref{FPKE-nonlinear} and $(\mu_{t,{n_k}})_{t \in [0,T]}$, $k \in \mathbb N$, to the finite-dimensional nonlinear Equations \eqref{eq:FPKE-nonlinear-Hn}, we ``freeze'' all of these measures and consider linear FPKEs of the form
\begin{align*}
    \partial_t \varrho_t = L_{\mu_t}^{*} \, \varrho_t 
\end{align*}
and, for $k \in \mathbb N$,
\begin{align*}
    \partial_t \varrho_t = L_{n_k,\mu_{t,n_k}}^{*} \, \varrho_t. 
\end{align*}
But then, $(\mu_t)_{t \in [0,T]}$ and $(\mu_{t,{n_k}})_{t \in [0,T]}$, for $k \in \mathbb N$, are again particular solutions of these linear FPKEs.
Since our assumptions from Section \ref{section:nonlinear-Assumptions} are uniform in the dependence on the measure, all assumptions from Theorem \ref{cor:SP-onH-linear-anySolutionwith} hold for our new coefficients $(t,x) \longmapsto b(t, x, \mu_{t})$ and $(t,x) \longmapsto \sigma(t, x, \mu_{t})$ in the linear case.
It follows that we can use Theorem \ref{cor:SP-onH-linear-anySolutionwith} to obtain the desired martingale solution $P$ in the sense of Definition \ref{def:martingale-solution}.
\end{proof}

\appendix
\section{Measure-separating families of finitely based functions}
Let us follow up on the lemma on countable measure-separating families of finitely based functions, which can be proved by using similar techniques as in \cite[p.\ 119]{MR92}.

\begin{lemma}\label{Lemma:count_sep_familiy_measure_sep}
There exits a countable family $\mathcal F$ of functions in $\mathcal FC^\infty_c (\{e_i\})$, which separates measures on $\mathcal B( \mathbb R^\infty )$ (i.e.~for any two measures $\mu_1, \mu_2 \in \mathcal B( \mathbb R^\infty )$ with $\mu_1 \neq \mu_2$ there exists $f \in \mathcal F$ such that $\int_{\mathbb R^\infty} f \, \mathrm{d} \mu_1 \neq \int_{\mathbb R^\infty} f \, \mathrm{d} \mu_2$).
\end{lemma}

\begin{proof}
Let us begin by first showing the following two simplified claims for point-separating families.
\smallskip
\\
\noindent
\textbf{Claim 1:} There exists a family $\tilde{\mathcal F} \subseteq \mathcal FC^\infty_c(\{e_i\})$ that separates points in $\mathbb R^\infty$.
\smallskip
\\
\noindent
\textbf{Proof of Claim 1:}
Let $y_1, y_2 \in \mathbb R^\infty$ with $y_1 \neq y_2$.
Consequently, there exists some $d \in \mathbb N$ such that their $d$-th component differs, i.e.~$y_1^d \neq y_2^d$.
We consider
\begin{align*}
      \Pi_d^\infty y_1 = ( y_1^1, \ldots , y_1^d) \in \mathbb R^d \quad \text { and } \quad  \Pi_d^\infty y_2 = ( y_2^1, \ldots , y_2^d) \in \mathbb R^d,
\end{align*}
where $\Pi_d^\infty$ is the projection onto $\mathbb R^d$, and obtain $\Pi_d^\infty y_1 \neq \Pi_d^\infty y_2$.
By using the fact that points in $\mathbb R^d$ can be separated by functions of class $C^\infty_c(\mathbb R^d)$, there exists some $f \in C^\infty_c(\mathbb R^d)$ such that $f(\Pi_d^\infty y_1) \neq f(\Pi_d^\infty y_2)$.
Hence, we can just consider the finitely based function $\xi \colon \mathbb R^\infty \longrightarrow \mathbb R$ given by $\xi(y):= f (\Pi_d^\infty (y) )$,  $y \in \mathbb R^\infty$.
The family $\tilde{\mathcal F}$ is then chosen to be the collection of all such functions $\xi$.
\qed
~
\\
\smallskip
\noindent
\textbf{Claim 2:} There exits a countable family $\mathcal F \subseteq\mathcal FC^\infty_c (\{e_i\})$ that separates points in $\mathbb R^\infty$.
\smallskip
\\
\noindent
\textbf{Proof of Claim 2:}
By using Claim 1 from above, it remains to show that there exists a subset $\mathcal F$ of the family $\tilde{\mathcal F}$ that is in fact countable.

For any function $f \in \mathcal FC^\infty_c(\{e_i\})$, let $(f,f) \colon \mathbb R^\infty \times \, \mathbb R^\infty \longrightarrow \mathbb R \times \mathbb R$ be the function given by $(f, f)(y_1, y_2):= (f(y_1), f(y_2))$.
Set $D_{\mathbb R^\infty} := \{ (y_1,y_2) \in \mathbb R^\infty \times \, \mathbb R^\infty \mid y_1=y_2\}$ to be the diagonal of $\mathbb R^\infty  \times \, \mathbb R^\infty$ and analogously let $D_{\mathbb R}$ be the diagonal of $\mathbb R \times \mathbb R$.
Then, since the functions $f \in \tilde{\mathcal F}$ separate points in $\mathbb R^\infty$, the equation
\begin{align}\label{eq:lemma-cover-diagonal}
  \big(\mathbb R^\infty \times \, \mathbb R^\infty \big) \setminus D_{\mathbb R^\infty} = \bigcup\limits_{f \in \tilde{\mathcal F}} (f,f)^{-1} (\mathbb R \times \mathbb R \setminus D_{\mathbb R})
\end{align}
holds.

Furthermore, note that $\mathbb R^\infty \times \, \mathbb R^\infty$ is a Polish space and, hence, a strongly Lindelöf space.
Since $\big( \mathbb R^\infty \times \, \mathbb R^\infty\big) \setminus D_{\mathbb R^\infty}$ is an open subset of $\mathbb R^\infty \times \, \mathbb R^\infty$, the open cover on the right hand side of Equation \eqref{eq:lemma-cover-diagonal} has a countable subcover, i.e.~there exists a countable family $\mathcal F \subseteq \mathcal FC^\infty_c (\{e_i\})$ such that
\begin{align*}
  \big( \mathbb R^\infty \times \, \mathbb R^\infty\big) \setminus D_{\mathbb R^\infty} = \bigcup\limits_{f \in \mathcal F} (f,f)^{-1} (\mathbb R \times \mathbb R \setminus D_{\mathbb R}).
\end{align*}
The assertion follows.
\qed

Finally, let us prove the statement of the lemma. 
Without loss of generality we can assume that $\mathcal F$ is a multiplicative system.
By a monotone class argument it then remains to prove that $\mathcal F$ generates $\mathcal B (\mathbb R^\infty)$.
Since the functions in $\mathcal F$ are continuous, we immediately have $\sigma (\mathcal F) \subseteq \mathcal B (\mathbb R^\infty)$.
 
In addition, we consider the measurable function $id$ mapping from the Polish space $(\mathbb R^\infty, \mathcal B (\mathbb R^\infty))$ to the space $(\mathbb R^\infty, \sigma (\mathcal F))$ equipped with the countably generated $\sigma$-algebra $\sigma (\mathcal F)$.
By Kuratowski's theorem (see e.g.~\cite[p.\ 487f]{Kur66} or \cite[Section I.3,  p.\ 15ff]{Par67}) it follows that $id^{-1}$ is $\sigma (\mathcal F) / \mathcal B (\mathbb R^\infty)$-measurable, which implies that $\mathcal B (\mathbb R^\infty) \subseteq \sigma (\mathcal F)$.
\end{proof}

\section*{Acknowledgement}
\noindent
The author would like to thank Michael Röckner and Vladimir I.~Bogachev for fruitful discussions and helpful comments.
Financial support by the DFG through the CRC 1283 ``Taming uncertainty and profiting from randomness and low regularity in analysis, stochastics and their applications'' is acknowledged.

\end{document}